\newtheorem{thm}{Theorem}[section]
\newtheorem{cor}[thm]{Corollary}
\newtheorem{lemma}[thm]{Lemma} %%Delete [thm] to re-start numbering
\newtheorem{prop}[thm]{Proposition}
\theoremstyle{remark}
\newtheorem{remark}[thm]{Remark}
\theoremstyle{definition}
\newtheorem{defi}[thm]{Definition}
\newtheorem{example}[thm]{Example}
\newcommand\Ac{\mathcal{A}}
\newcommand\Arg{\mathrm{Arg}}
\newcommand\Cpx{\mathbb{C}}
\newcommand\FEu{{\EuScript F}}
\newcommand\HEu{{\EuScript H}}
\newcommand\htil{{\tilde h}}
\renewcommand\Im{\mathrm{Im}\,}
\newcommand\ind{\mathrm{ind}}
\newcommand\Lc{\mathcal{L}}
\newcommand\Log{\mathrm{Log}}
\newcommand\Nc{\mathcal N}
\newcommand\Nn{\mathbb N}
\newcommand\R{\mathbb{R}}
\renewcommand\Re{\mathrm{Re}\,}
\newcommand\supp{\mathrm{supp}}
\newcommand\Tb{\mathbb{T}}
\newcommand\Xc{\mathcal{X}}
\newcommand\Ints{{\mathbf Z}}
\newcommand\alg{\operatorname{alg}}
\begin{document}

\title[bifree central limit distributions]{Principal functions for bi-free central limit distributions}

\author[Dykema]{Kenneth J.\ Dykema}
\address{K.\ Dykema, Department of Mathematics, Texas A\&M University,
College Station, TX 77843-3368, USA}
\email{kdykema@math.tamu.edu}
\thanks{Research supported in part by NSF grant DMS--1202660}

\author[Na]{Wonhee Na}
\address{W. Na, Department of Mathematics, Texas A\&M University,
College Station, TX 77843-3368, USA}
\email{wonhee@math.tamu.edu}

\subjclass[2000]{46L54 (47A65)}
\keywords{Bi-freeness, bi-free central limit distribution, principal function}

\begin{abstract}
We find the principal function of the completely non-normal operator $l(v_1)+l(v_1)^*+i(r(v_2)+r(v_2)^*)$ on a subspace of the full Fock space $\FEu(\HEu)$ which arises from a bi-free central limit distribution. As an application, we find the essential spectrum of this operator.
\end{abstract}

\date{October 12, 2015}

\maketitle

\section{Introduction and Preliminaries}
\label{sec:prelims}

Bi-free independence was introduced by Voiculescu as a generalization of free independence in a non-commutative probability space $(\Ac,\varphi)$. He considered a two-faced family of non-commutative random variables, $(X_1,X_2)$, in $(\Ac,\varphi)$ and the expectation values for such a combined system of left and right variables. In \cite{Voi1}, Voiculescu proved a bi-free central limit theorem and described the family of distributions that appear as limits. These are called bi-free central limit distributions. 

\subsection{Bi-freeness}

Let $z=((z_i)_{i\in I},(z_j)_{j\in J})$ be a two-faced family in a non-commutative probability space $(\Ac,\varphi)$ where $I$ and $J$ are disjoint index sets.
\begin{defi}[\cite{Voi1}]
The two-faced families $z'$ and $z''$ are said to be {\em bi-freely independent} (abbreviated {\em bi-free}) if there exist two vector spaces $\left(\Xc', \Xc'^{\circ}, \xi'\right)$ and $\left(\Xc'', \Xc''^{\circ}, \xi''\right)$ with specified state vectors and unital homomorphisms 
$l^\epsilon : \Cpx\langle z_i^{\epsilon} | i\in I\rangle\to\Lc(\Xc^\epsilon)$ and $r^\epsilon : \Cpx\langle z_j^{\epsilon} | j\in J\rangle\to\Lc(\Xc^\epsilon)$ with $\epsilon\in\{',''\}$ such that the two-faced families $T^{\epsilon}=((\lambda^{\epsilon}\circ l^{\epsilon}(z_i^{\epsilon}))_{i\in I},(\rho^{\epsilon}\circ r^{\epsilon}(z_j^{\epsilon}))_{j\in J})$ have the same joint distribution in $(\Lc(\Xc),\varphi_{\xi})$ as $z'$ and $z''$ where $\left(\Xc, \Xc^{\circ}, \xi\right)=\left(\Xc', \Xc'^{\circ}, \xi'\right)\ast\left(\Xc'', \Xc''^{\circ}, \xi''\right)$ and $\lambda^{\epsilon}$ and $\rho^{\epsilon}$ are left and right representations of $\Lc(\Xc^{\epsilon})$ on $\Lc(\Xc)$.
\end{defi}

\begin{defi}[\cite{Voi1}]
For each map $\alpha : \{1,...,n\} \to I \amalg J$ there is a unique universal polynomial $R_\alpha$ in commuting variables $X_{\alpha(k_1)\cdots \alpha(k_r)}, 1\le k_1 < \cdots < k_r \le n$ such that
\begin{enumerate}[(i)]
\item $R_\alpha$ is homogeneous of degree $n$ where $X_{\alpha(k_1)\cdots \alpha(k_r)}$ is assigned degree $r$,
\item the coefficient of $X_{\alpha(1)\cdots \alpha(n)}$ is 1,
and 
\item if $z'=((z'_i)_{i \in I}, (z'_j)_{j \in J})$ and $z''=((z''_i)_{i \in I}, (z''_j)_{j \in J})$ are bi-free pairs of two-faced families of non-commutative random variables in $(\Ac, \varphi)$, then 
\[
R_\alpha (z')+R_\alpha (z'')=R_\alpha (z'+z'')
\] 
where $R_\alpha(z)=R_\alpha(\varphi (z_{\alpha(k_1)} \cdots z_{\alpha(k_r)}) | 1\le k_1 < \cdots < k_r \le n)$.
\end{enumerate}
These polynomials $R_{\alpha}$ are called {\em bi-free cumulants}.
\end{defi} 

\begin{thm}[\cite{Voi1}]
A two-faced family $z$ of non-commutative random variables has a bi-free central limit distribution if and only if $R_\alpha (z)=0$ whenever $\alpha : \{1,...,n\} \to I \amalg J$ with $n=1$ or $n\ge3$.
\end{thm}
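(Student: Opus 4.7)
The plan is to exploit the two defining properties of the bi-free cumulants: additivity under bi-free sums (property (iii)) and homogeneity of degree $n$, where the moment $\varphi(z_{\alpha(k_1)}\cdots z_{\alpha(k_r)})$ is assigned degree $r$ (property (i)). Under scaling $z \mapsto tz$, a mixed moment of order $r$ scales by $t^r$, so $R_\alpha(tz) = t^n R_\alpha(z)$. Recall that a bi-free central limit distribution is by definition the distributional limit of normalized sums of bi-free identically distributed centered copies.

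For the forward direction, let $z^{(1)}, z^{(2)}, \ldots$ be a sequence of bi-free copies of a centered two-faced family $z$ and set $z_N = \frac{1}{\sqrt N}(z^{(1)} + \cdots + z^{(N)})$. Combining additivity and homogeneity,
\[
R_\alpha(z_N) \;=\; \sum_{k=1}^N R_\alpha\!\left(\tfrac{1}{\sqrt N}z^{(k)}\right) \;=\; N \cdot N^{-n/2} R_\alpha(z) \;=\; N^{1-n/2}R_\alpha(z).
\]
For the limit of the $z_N$ to exist one must have $R_\alpha(z)=0$ when $n=1$ (otherwise this quantity blows up as $\sqrt N$), while for $n \ge 3$ the right-hand side tends to $0$. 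Only the terms with $n=2$ survive in the limit. Hence the cumulants $R_\alpha$ of a bi-free central limit distribution must vanish for $n=1$ and $n\ge 3$.

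For the converse direction, I would invoke a moment-cumulant inversion: the universal character of the $R_\alpha$, together with the normalization that the coefficient of $X_{\alpha(1)\cdots\alpha(n)}$ is $1$ (property (ii)), makes the transformation from joint moments of $z$ to the tuple $\bigl(R_\alpha(z)\bigr)_\alpha$ triangular and therefore invertible. Consequently, if all $R_\alpha(z)$ with $n=1$ or $n\ge 3$ vanish, then the joint moments of $z$ are completely determined by the covariance data $\bigl(R_\alpha(z)\bigr)_{n=2}$; and these are the same moments one obtains by carrying out the bi-free CLT construction starting from any family with the same covariances. Thus $z$ has the bi-free central limit distribution with the prescribed covariance.

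The main obstacle is the converse direction, specifically the need for moment-cumulant inversion in the bi-free setting, i.e., for the statement that the joint distribution of $z$ is determined by the collection $(R_\alpha(z))_\alpha$. In the forward direction the only subtlety is justifying that distributional convergence of $z_N$ entails convergence of each mixed moment separately, so that the limit of $R_\alpha(z_N)$ is meaningful; this is built into the definition of distributional convergence of two-faced families adopted in \cite{Voi1}.
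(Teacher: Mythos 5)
This theorem is quoted from \cite{Voi1} and the paper contains no proof of it, so there is nothing internal to compare your argument against; judged on its own, your outline is the standard one and is correct. The forward direction (additivity plus degree-$n$ homogeneity giving $R_\alpha(z_N)=N^{1-n/2}R_\alpha(z)$, so only $n=2$ survives in the limit) is exactly the usual central-limit computation; the only points to make explicit are that cumulants of the limit are limits of cumulants because each $R_\alpha$ is a fixed polynomial in the joint moments, and that the $N$-fold additivity you use requires bi-freeness of partial sums, which holds in the CLT setup. For the converse, the inversion you flag as the main obstacle is in fact immediate from properties (i) and (ii): homogeneity forces every monomial of $R_\alpha$ other than $X_{\alpha(1)\cdots\alpha(n)}$ to be a product of variables of strictly smaller degree, so $R_\alpha(z)=\varphi(z_{\alpha(1)}\cdots z_{\alpha(n)})+(\text{polynomial in lower-order moments})$ and moments are recovered recursively from cumulants; it is then cleanest to run the CLT on bi-free copies of $z$ itself (or to compare with the Fock-space family of Theorem~\ref{thm:CLT} with covariance $C_{kl}=\varphi(z_kz_l)$), which supplies the required family with the prescribed covariance and completes the identification of the distribution of $z$ with a bi-free central limit distribution.
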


We now recall the notion of a two-faced system with rank $\le1$ commutation given in \cite{Voi2}.
\begin{defi}
An {\em implemented non-commutative probability space} is a triple $(\Ac,\varphi,P)$ where $(\Ac,\varphi)$ is a non-commutative probability space and $P=P^2\in\Ac$ is an idempotent so that 
\[
PaP=\varphi(a)P\mbox{ for all } a\in\Ac.
\]
An {\em implemented $C^*$-probability space} $(\Ac,\varphi,P)$ will satisfy additional requirements that $(\Ac,\varphi)$ is a $C^*$-probability space and that $P=P^*$. If a two-faced family $((z_i)_{i\in I},(z_j)_{j\in J})$ in an implemented non-commutative probability space $(\Ac,\varphi,P)$ satisfies that 
\[
[z_i,z_j]=\lambda_{i,j}P\mbox{ for some } \lambda_{i,j}\in\Cpx,i\in I,j\in J,
\] 
then the family $((z_i)_{i\in I},(z_j)_{j\in J})$ is called a {\em system with rank $\le 1$ commutation} where $(\lambda_{i,j})_{i\in I,j\in J}$ is the coefficient matrix of the system. 
\end{defi}

\begin{defi}
Let $\HEu$ be a complex Hilbert space. Then the {\em full Fock space} on $\HEu$ is 
\[
\FEu(\HEu)=\Cpx\Omega \oplus\bigoplus_{n\ge 1}\HEu^{\otimes n}
\]
where $\Omega$ is called the {\em vacuum vector} and has norm one. The {\em vacuum expectation} is defined as $\varphi_{\Omega}=\langle\cdot\Omega,\Omega\rangle$ on $\FEu(\HEu)$. For $\xi\in\HEu$, the {\em left creation operator} $l(\xi)\in B(\FEu(\HEu))$ is given by the formulas $l(\xi)\Omega=\xi$ and
\[
l(\xi)(\xi_1\otimes\cdots\otimes\xi_n)=\xi\otimes\xi_1\otimes\cdots\otimes\xi_n
\] 
for all $n\ge1$ and $\xi_1,\cdots,\xi_n\in\HEu.$ The adjoint $l(\xi)^*$ of $l(\xi)$ is called the {\em left annihilation operator}. The {\em right creation operator} $r(\xi)\in B(\FEu(\HEu))$ is determined by the formulas $r(\xi)\Omega=\xi$ and 
\[
r(\xi)(\xi_1\otimes\cdots\otimes\xi_n)=\xi_1\otimes\cdots\otimes\xi_n\otimes\xi
\] 
for all $n\ge1$ and $\xi_1,\cdots,\xi_n\in\HEu.$ The adjoint $r(\xi)^*$ of $r(\xi)$ is called the {\em right annihilation operator}. 
\end{defi}

\begin{thm}[Theorem 7.4 of \cite{Voi1}]
\label{thm:CLT}
For each matrix $C=(C_{kl})_{k,l\in I\amalg J}$ with complex entries, there is exactly one bi-free central limit distribution $\varphi_C : \Cpx\langle Z_k | k\in I\amalg J\rangle\to\Cpx$  so that 
\[
\varphi_C(Z_kZ_l)=C_{kl} \mbox{  for each }k,l\in I\amalg J.
\]
If $h,h' : I\amalg J\to\HEu$ are maps into the Hilbert space $\HEu$ and we define
\begin{align*}
z_i&=l(h(i))+l^*(h'(i)) \mbox{  if }i\in I\\[1ex]
z_j&=r(h(j))+r^*(h'(j)) \mbox{  if }j\in J
\end{align*} 
then $z=((z_i)_{i\in I},(z_j)_{j\in J})$ has a bi-free central limit distribution $\varphi_C$ where $C_{kl}=\langle h(l), h'(k)\rangle$. Every bi-free central limit distribution when I and J are finite can be obtained in this way.
\end{thm}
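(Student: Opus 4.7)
The proof splits naturally into uniqueness and existence.

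\emph{Uniqueness.} By the preceding characterization theorem, a bi-free central limit distribution $\varphi_C$ has $R_\alpha(z) = 0$ for every $\alpha : \{1,\ldots,n\} \to I \amalg J$ with $n = 1$ or $n \ge 3$. The first condition forces $\varphi_C(Z_k) = 0$. For $n = 2$, the universal polynomial $R_\alpha$ is homogeneous of degree two with leading coefficient $1$ on $X_{\alpha(1)\alpha(2)}$, and its only other possible monomials are quadratic in $X_{\alpha(1)}, X_{\alpha(2)}$; upon substituting the $Z$-moments and using the vanishing of the means, these extra terms die, so $R_\alpha(z) = \varphi_C(Z_{\alpha(1)} Z_{\alpha(2)}) = C_{\alpha(1)\alpha(2)}$. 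The bi-free moment-cumulant inversion (the fact that $R_\alpha$ equals the leading moment up to a universal polynomial in lower-order moments, and this system is triangular) then expresses every joint moment as a universal polynomial in the $C_{kl}$, so $\varphi_C$ is determined by $C$.

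\emph{Existence.} Given $C$, I would take $\HEu$ equipped with an orthonormal system $(e_k)_{k\in I\amalg J}$ and set $h'(k) = e_k$ and $h(l) = \sum_k C_{kl}\, e_k$, so that $\langle h(l), h'(k)\rangle = C_{kl}$. Define the $z_k$ as in the statement and consider $\varphi_\Omega$. Since $l(\xi)^* \Omega = r(\xi)^* \Omega = 0$ and the creation operators send $\Omega$ into $\HEu \perp \Cpx\Omega$, one obtains $\varphi_\Omega(z_k) = 0$. The only surviving term in $\varphi_\Omega(z_k z_l)$ is an annihilation-after-creation pairing which, via $l(\xi)^* l(\eta)\Omega = \langle \eta, \xi\rangle \Omega$, the analogous right identity, and the mixed cases $l(\xi)^* r(\eta)\Omega = r(\xi)^* l(\eta)\Omega = \langle \eta, \xi\rangle \Omega$ available because $\eta$ is a single-tensor vector, yields $\langle h(l), h'(k)\rangle = C_{kl}$. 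Thus the covariance matches $C$.

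The remaining and crucial step is to verify that $z$ has a bi-free central limit distribution, that is, that all bi-free cumulants of $z$ of length $\ne 2$ vanish; I expect this to be the main obstacle. The plan is a Wick-type expansion: expanding $\varphi_\Omega(z_{\alpha(1)}\cdots z_{\alpha(n)})$ by linearity as a sum over assignments (creation vs.\ annihilation for each factor), only those pairings in which each annihilation cancels a strictly earlier creation of the matching left/right type survive after taking the inner product with $\Omega$. The left/right tensor structure of $\FEu(\HEu)$ forces the surviving matchings to be precisely the bi-non-crossing pair partitions of $\{1,\ldots,n\}$ compatible with $\alpha$, and each contributes the appropriate product of two-point covariances $\langle h(\alpha(t)), h'(\alpha(s))\rangle = C_{\alpha(s)\alpha(t)}$. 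This Wick sum is exactly the value predicted by the moment-cumulant inversion with only the length-two cumulants $R_{(k,l)} = C_{kl}$ nonzero, which forces all higher bi-free cumulants of $z$ to vanish. Combining this with the uniqueness step identifies the joint distribution of $z$ with $\varphi_C$ and yields the final assertion of the theorem.
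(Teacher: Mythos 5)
The paper does not prove this statement; it is quoted verbatim as Theorem 7.4 of \cite{Voi1}, so there is no in-paper argument to measure yours against, and what follows assesses your proposal on its own terms. Your uniqueness step and the covariance computation are fine: vanishing of the order-one cumulants gives centredness, the degree-two universal polynomial then reduces to the second moment, the triangular form of the $R_\alpha$ (top monomial $X_{\alpha(1)\cdots\alpha(n)}$ with coefficient $1$) recovers all moments from the cumulants, and $\varphi_\Omega(z_kz_l)=\langle h(l),h'(k)\rangle$ is a one-line Fock computation. (Minor caveat: your choice $h(l)=\sum_k C_{kl}e_k$ only converges for finite index sets or square-summable columns; for general $I\amalg J$ one should build the model over finite subsets and use consistency of the resulting moments, though the final clause of the theorem is anyway stated for finite $I,J$.)

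The genuine gap is precisely the step you yourself flag as the main obstacle, and your sketch does not close it. You expand $\varphi_\Omega(z_{\alpha(1)}\cdots z_{\alpha(n)})$ as a Wick sum, claim the surviving terms are indexed by ``bi-non-crossing pair partitions,'' and then assert that this sum ``is exactly the value predicted by the moment-cumulant inversion with only the length-two cumulants nonzero.'' But with the definition of cumulants in force here --- Voiculescu's universal polynomials $R_\alpha$, characterized only by homogeneity, the normalization of the top coefficient, and additivity on bi-free pairs --- there is no combinatorial moment-cumulant formula over bi-non-crossing partitions available to quote; that formula is itself a nontrivial theorem (later work of Mastnak--Nica and Charlesworth--Nelson--Skoufranis), and proving the asserted identity is essentially equivalent to the statement being proved. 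Likewise, the claim that the left/right tensor structure of $\FEu(\HEu)$ forces exactly the right class of pairings to survive is stated, not argued, and is delicate because right operators act at the opposite end of the tensors. A way to complete the proof without this machinery is Voiculescu's own route: show that the joint distribution of the Fock family depends only on the numbers $\langle h(l),h'(k)\rangle$; observe that direct sums of Hilbert spaces realize sums of bi-free copies of such families, so the family with covariance $C$ has the same distribution as the normalized sum of $N$ bi-free copies built from $N^{-1/2}h$, $N^{-1/2}h'$; then invoke the bi-free central limit theorem to conclude that all cumulants of order $1$ and $\ge 3$ vanish, which together with your uniqueness step identifies the distribution with $\varphi_C$.
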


\begin{remark}
The bi-free two-faced system in Theorem \ref{thm:CLT} is an example of rank $\le 1$ commutation. Indeed, $(B(\FEu(\HEu)),\varphi_{\Omega},P)$ is an implemented $C^*$-probability space where $\varphi_{\Omega}$ is the vacuum expectation and $P$ is a projection on $\Cpx\Omega$. We have $[z_i,z_j]=(\langle h(j),h'(i)\rangle-\langle h(i),h'(j)\rangle)P$.
\end{remark}

\subsection{Principal function of a completely non-normal operator} 

Let $T$ be a completely non--normal operator on a Hilbert space $\HEu$ with self-commutator $T^*T-TT^*=-2C$ that is trace class. Set $U=\frac{1}{2}(T+T^*)$ and $V=-\frac{1}{2}i(T-T^*)$. Consider the C*-algebra generated by $C$ and the identity operator on $\HEu$; it is isometrically isomorphic to $C(\sigma(C))$, the complex valued continuous functions on $\sigma(C)$, by the Gelfand-Naimark theorem.
Consider the function on $\sigma(C)$,
\[
t\mapsto
\begin{cases}
-i\sqrt{-t}, & t<0\\
0, & t=0\\
\sqrt{t}, & t>0
\end{cases}
\]
and there exists the unique element $\hat{C}$ in the C*-algebra corresponding to this function by the Gelfand transform. Note that $\hat{C}^2=C$ and $\hat{C}\hat{C}^*=\hat{C}^*\hat{C}=|C|$. 

The {\em determining function} of the operator $T$ is defined to be 
\[
E(l,s)=I+\frac{1}{i}\hat{C}(V-l)^{-1}(U-s)^{-1}\hat{C}
\]
for $l\in\Cpx\setminus\sigma(V)$ and $s\in\Cpx\setminus\sigma(U)$. Then $E(l,s)$, for each fixed $l$ and $s$, is an invertible element in the C*-algebra generated by $T$ and $I$. Since $\det(I+AB)=\det(I+BA)$ when $A$ is compact with $AB$ and $BA$ in trace class, we have  
\begin{align}\label{equ:DF}
\det E(l,s)
&=\det\left(I+\frac{1}{i}C(V-l)^{-1}(U-s)^{-1}\right)\notag\\[1ex]
&=\det\left((V-l)(U-s)(V-l)^{-1}(U-s)^{-1}\right).
\end{align}

The {\em principal function} $g$ is defined in \cite{CP2} to be the element of $L_1(\R^2)$ such that
\begin{equation}\label{equ:PF}
\det E(l,s)=\exp\left( \frac{1}{2\pi i}\iint g(\delta, \gamma) \frac{d\delta}{\delta-l}\frac{d\gamma}{\gamma-s}\right).
\end{equation}
It is known that $\supp(g)$ is contained in $\{(\delta,\gamma)\in\R^2\ |\ \gamma+i\delta\in\sigma(T)\}$. Moreover, it is a complete unitary invariant for $T$ if $C$ has one dimensional range; that is, two completely non-normal operators $T$ and $T'$ are unitarily equivalent if and only if their principal functions agree, assuming each of $T$ and $T'$ has a self-commutator with one dimensional range. In Theorem 8.1 of \cite{CP2}, it is proved that 
\[
g(\delta,\gamma)=\ind(T-(\gamma+i\delta))
\]
if $\gamma+i\delta$ is not in the essential spectrum $\sigma_{e}(T)$. This result implies that the principal function $g$ of $T$ is an extension of the Fredholm index of $T-z$ to the whole plane. However, it is not the typical situation that $g$ assumes only integer values on the plane; indeed the map $T\mapsto g$ is onto, namely (see \cite{CP3}), any summable function on $\R^2$ with compact support is the principal function of a completely non-normal operator with a trace class self-commutator.

\section{The principal function of certain operators}
\label{sec:princfunc}

\subsection{}
Let $\HEu$ be a Hilbert space and $v_1, v_2\in\HEu$.
We consider the operator $T$ on $\FEu(\HEu)$ given by
\[ %\begin{equation}\label{eq:T}
T=X_1+iX_2,\qquad\text{with}\quad X_1=l(v_1)+l(v_1)^*,\quad X_2=r(v_2)+r(v_2)^*.
\] %\end{equation}
This arises from the bi-free central limit distribution and was described in Example 3.10 of \cite{Voi2}.
As we discussed in Section~\ref{sec:prelims}, we have $[X_1,X_2]=2i(\Im\langle v_2,v_1\rangle)P$
in the implemented $C^*$-probability space $(B(\FEu(\HEu)),\varphi_{\Omega},P)$, so that 
\begin{equation}\label{equ:SC}
[T,T^*]=4(\Im\langle v_2,v_1\rangle)P.
\end{equation}
Both the spectrum and the essential spectrum of $X_1$ on $\FEu(\HEu)$ equal $[-2\|v_1\|,2\|v_1\|]$ and those of $X_2$ equal $[-2\|v_2\|,2\|v_2\|]$. By the following easy lemma, which is well known but whose proof we include for convenience,
the spectrum of the operator $T=X_1+iX_2$ on $\FEu(\HEu)$ is contained in $[-2\|v_1\|,2\|v_1\|]+i [-2\|v_2\|,2\|v_2\|]$. Throughout this paper, we are interested in non-normal operators $T$; so we assume that $\Im\langle v_2,v_1\rangle$ is non-zero.

\begin{lemma}\label{lem:SPEC}
If $A$ and $B$ are self-adjoint with $\sigma(A)\subseteq[r_1,r_2]$ and $\sigma(B)\subseteq[t_1,t_2]$, then $\sigma(A+iB)\subseteq[r_1,r_2]+i[t_1,t_2]$. 
\end{lemma}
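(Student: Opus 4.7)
The plan is to show that if $\lambda=s+it\notin[r_1,r_2]+i[t_1,t_2]$, then $A+iB-\lambda$ is invertible. Write
\[
A+iB-\lambda=(A-s)+i(B-t),
\]
so that the real and imaginary parts of this operator are the self-adjoint operators $A-s$ and $B-t$. Since $\lambda$ is outside the rectangle, either $s\notin[r_1,r_2]$ or $t\notin[t_1,t_2]$.

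I would handle the two cases symmetrically by reducing to a single lemma: if $C$ is a bounded operator on a Hilbert space whose imaginary part $\Im C=(C-C^*)/(2i)$ satisfies $|\Im C|\ge\epsilon I$ for some $\epsilon>0$, then $C$ is invertible. The standard argument is that for any unit vector $x$,
\[
\|Cx\|\ge|\langle Cx,x\rangle|\ge|\Im\langle Cx,x\rangle|=|\langle(\Im C)x,x\rangle|\ge\epsilon,
\]
so $C$ is bounded below, and the same estimate applied to $C^*$ (whose imaginary part is $-\Im C$) shows $C^*$ is bounded below; hence $C$ has closed range and trivial kernel and cokernel, so is invertible.

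If $t\notin[t_1,t_2]$, then $t\notin\sigma(B)$, and the spectral theorem gives $|B-t|\ge\epsilon I$ for some $\epsilon>0$; applying the lemma to $C=(A-s)+i(B-t)$ (whose imaginary part is $B-t$) yields invertibility of $A+iB-\lambda$. If instead $s\notin[r_1,r_2]$, I would apply the same lemma to $-i(A+iB-\lambda)=(B-t)-i(A-s)$, whose imaginary part is $-(A-s)$ and is bounded below in absolute value by the spectral theorem applied to $A$.

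There is no serious obstacle here; the only thing to watch is getting the case analysis clean and invoking the spectral theorem for $A$ and $B$ correctly to produce the strict lower bound on $|A-s|$ or $|B-t|$. Everything else is a one-line numerical-range estimate.
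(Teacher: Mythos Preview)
Your overall strategy is sound, but the key lemma as you state it is false. You claim that if $|\Im C|\ge\epsilon I$ then $C$ is invertible, and your argument uses $|\langle(\Im C)x,x\rangle|\ge\epsilon$ for unit $x$. That inequality fails when $\Im C$ has spectrum on both sides of zero. Concretely, with
\[
A=\begin{pmatrix}0&1\\1&0\end{pmatrix},\qquad B=\begin{pmatrix}1&0\\0&-1\end{pmatrix},
\]
one has $|\Im(A+iB)|=|B|=I$, yet $\det(A+iB)=i(-i)-1=0$, so $A+iB$ is not invertible.

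The fix is immediate: in your application you have more than $|B-t|\ge\epsilon I$. Since $t\notin[t_1,t_2]\supseteq\sigma(B)$, the operator $B-t$ is \emph{definite} (either $\ge\epsilon I$ or $\le-\epsilon I$), and under that stronger hypothesis the numerical-range estimate $|\langle(\Im C)x,x\rangle|\ge\epsilon$ is valid. So restate the lemma with $\Im C\ge\epsilon I$ (the case $\Im C\le-\epsilon I$ follows by passing to $-C$), and your proof goes through. The same remark applies in the second case to $A-s$.

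For comparison, the paper also splits into the four boundary cases and uses definiteness of $B-b$ (respectively $A-a$), but instead of a numerical-range bound it factors
\[
A_1+iB_1=B_1^{1/2}\bigl(B_1^{-1/2}A_1B_1^{-1/2}+i\bigr)B_1^{1/2}
\]
when $B_1>0$ is invertible, and observes that each factor is invertible. Once corrected, your argument is equally short and avoids taking operator square roots.
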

\begin{proof}
If $A_1=A_1^*$, $B_1\ge0$, and $B_1$ is invertible, then $A_1+iB_1=B_1^{\frac{1}{2}}\left(B_1^{-\frac{1}{2}}A_1B_1^{-\frac{1}{2}}+i\right)B_1^{\frac{1}{2}}$ is invertible since $B_1^{-\frac{1}{2}}A_1B_1^{-\frac{1}{2}}$ is self-adjoint. Suppose $a+ib\notin[r_1,r_2]+i[t_1,t_2]$  . Then either $a<r_1$ or $a>r_2$ or $b<t_1$ or $b>t_2$. If $b<t_1$, then $A+iB-(a+ib)=(A-a)+i(B-b)$ and $B-b\ge0$ is invertible. So $a+ib\notin\sigma(A+iB)$. If $b>t_2$, then $a+ib-(A+iB)=(a-A)+i(b-B)$ and $b-B\ge0$ is invertible, so that $a+ib\notin\sigma(A+iB)$. Since $(A-a)+i(B-b)=i((B-b)-i(A-a))$, we can easily show that $A+iB-(a+ib)$ is invertible for each  case of $a<r_1$ and $a>r_2$. Therefore, $\sigma(A+iB)\subseteq[r_1,r_2]+i[t_1,t_2]$.
\end{proof}

The operator $T\in B(\HEu)$ is said to be {\em hyponormal}, if its self-commutator $T^*T-TT^*$ is positive. Furthermore, if there is no 
reducing subspace of $T$, the restriction of $T$ to which is normal, then T is said to be {\em pure hyponormal} or {\em completely non-normal hyponormal}.

\begin{thm}[Theorem 2.1.3 of \cite{MP}]
\label{thm:PP}
Let $T\in B(\HEu)$ be a hyponormal operator with $[T^*,T]=D$.
Then there is a unique orthogonal decomposition $\HEu=\HEu_p(T)\oplus\HEu_n(T)$ where $\HEu_p(T)$ and $\HEu_n(T)$ are reducing subspaces for $T$, such that 
\begin{enumerate}[(i)]
\item $T_p=T|_{\HEu_p(T)}$ is pure hyponormal,
\item $T_n=T|_{\HEu_n(T)}$ is normal.
\end{enumerate}
Moreover, 
\[
\HEu_p(T)=\bigvee\{T^{\ast k}T^lD(\HEu)\ |\ k,l\in\Nn\}
\qquad\text{and}\qquad
\HEu_n(T)=\{\zeta\in\HEu\ |\ DT^{\ast l}T^k\zeta=0 \mbox{ for every }k,l\in\Nn\}.
\]
\end{thm}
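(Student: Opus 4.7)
The plan is to take the two formulas in the theorem statement as the \emph{definitions} of $\HEu_p(T)$ and $\HEu_n(T)$, observe that they are orthogonal complements of each other, and then verify in sequence that both reduce $T$, that $T|_{\HEu_p(T)}$ is pure hyponormal, that $T|_{\HEu_n(T)}$ is normal, and that the decomposition is unique. The orthogonal-complement claim is immediate from the adjoint identity
\[
\langle T^{*k}T^lD\eta,\zeta\rangle = \langle\eta, DT^{*l}T^k\zeta\rangle,
\]
which shows that $\zeta\in\HEu_p(T)^\perp$ if and only if $DT^{*l}T^k\zeta = 0$ for all $k,l\in\Nn$.

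The only step requiring real work is $T$-invariance of $\HEu_p(T)$, since $T^*$-invariance is immediate from $T^*\cdot T^{*k}T^lD\eta = T^{*(k+1)}T^lD\eta$. I would first establish by induction on $k$, starting from $[T,T^*]=-D$, the commutator formula
\[
[T, T^{*k}] = -\sum_{j=0}^{k-1} T^{*j}DT^{*(k-1-j)},
\]
and then expand
\[
T\cdot T^{*k}T^lD\eta = T^{*k}T^{l+1}D\eta - \sum_{j=0}^{k-1}T^{*j}D\bigl(T^{*(k-1-j)}T^lD\eta\bigr),
\]
each summand of which is of the form $T^{*a}T^bD\xi$ and so belongs to $\HEu_p(T)$. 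Consequently $\HEu_p(T)$, and therefore $\HEu_n(T) = \HEu_p(T)^\perp$, reduces $T$.

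Setting $k=l=0$ in the definition of $\HEu_n(T)$ gives $D\zeta=0$ for every $\zeta\in\HEu_n(T)$, so $[T^*,T]|_{\HEu_n(T)}=0$ and $T|_{\HEu_n(T)}$ is normal. The key \emph{maximality} observation is that if $\mathcal{K}$ is any reducing subspace for $T$ with $T|_\mathcal{K}$ normal, then $D|_\mathcal{K}=0$ and $T^{*l}T^k\mathcal{K}\subseteq\mathcal{K}$ together give $DT^{*l}T^k|_\mathcal{K}=0$, so $\mathcal{K}\subseteq\HEu_n(T)$. In particular any reducing subspace of $T|_{\HEu_p(T)}$ on which $T$ is normal lies in $\HEu_n(T)\cap\HEu_p(T) = \{0\}$, so $T|_{\HEu_p(T)}$ is pure hyponormal.

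For uniqueness, suppose $\HEu = \mathcal{M}\oplus\mathcal{N}$ is any decomposition of the stated type. Applying the maximality observation to $\mathcal{N}$ gives $\mathcal{N}\subseteq\HEu_n(T)$; applying the purity argument to $\HEu_n(T)\cap\mathcal{M}$ (reducing for $T|_\mathcal{M}$, with $T$ normal there, hence trivial) gives $\HEu_n(T)\subseteq\mathcal{N}$. Hence $\mathcal{N} = \HEu_n(T)$ and $\mathcal{M} = \HEu_p(T)$. The main obstacle in the whole argument is just keeping the commutator bookkeeping straight in the $T$-invariance step; the rest is a formal game with reducing subspaces.
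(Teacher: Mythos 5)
Your argument is correct; note that the paper itself offers no proof of this statement, quoting it directly as Theorem 2.1.3 of \cite{MP}, and what you have written is essentially the standard argument from that reference (define $\HEu_p(T)$ by the stated span, check it reduces $T$ via the commutator identity $[T,T^{*k}]=-\sum_j T^{*j}DT^{*(k-1-j)}$, and use the maximality of $\HEu_n(T)$ among normal reducing subspaces for purity and uniqueness). The only points worth a word in a written version are the trivial ones you left implicit: the restriction of $T$ to a reducing subspace is again hyponormal, and in the uniqueness step the passage from $\HEu_n(T)\cap\mathcal{M}=\{0\}$ to $\HEu_n(T)\subseteq\mathcal{N}$ uses the already-established inclusion $\mathcal{N}\subseteq\HEu_n(T)$ together with the orthogonal decomposition $\HEu=\mathcal{M}\oplus\mathcal{N}$.
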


As we can see in (\ref{equ:SC}), if $\Im\langle v_2,v_1\rangle\le0$ (or $\ge0$), then $T=X_1+iX_2$ is a hyponormal operator (or cohyponormal, respectively) on $\FEu(\HEu)$. By Theorem \ref{thm:PP}, the pure parts $\HEu_p(T)$ and $\HEu_p(T^*)$ of $T$ and $T^*$ are equal to $\overline{\alg(T,T^*,1)\Omega}$. 

Assuming that $\Im\langle v_2,v_1\rangle\le0$, if $v_2$ is a scalar mutiple of $v_1$, then $\alg(T,T^*,1)\Omega$ is dense in $\FEu(\Cpx\langle v_1\rangle)$ so that $T$ is pure hyponormal on $\FEu(\Cpx\langle v_1\rangle)$. However, if $v_2$ is not a scalar multiple of $v_1$, then $T$ is not a pure hyponormal operator on $\FEu(\Cpx\langle v_1,v_2\rangle)$, that is, there exists a nontrivial reducing subspace $\Nc$ of $T$ in $\FEu(\Cpx\langle v_1,v_2\rangle)$ such that $T|_{\Nc}$ is normal. For, suppose that $u$ is a unit vector which is orthogonal to $v_1$ in $\Cpx\langle v_1,v_2\rangle$ and $v_2 = cv_1+du$ where $c,d\in\Cpx$ are non-zero. Since $v_2$ and $\frac{c}{|c|^2}v_1-\frac{d}{|d|^2}u$ are orthogonal to each other, for each $m,n\in\Nn$,
\[
\left(l(v_1)+l(v_1)^*\right)^{m} \left(u\otimes \left(\frac{c}{|c|^2}v_1-\frac{d}{|d|^2}u\right)\right)\in\mbox{span}\left\{v_1^{\otimes k}\otimes u\otimes \left(\frac{c}{|c|^2}v_1-\frac{d}{|d|^2}u\right)\ \middle|\ k\in\Nn\right\}
\] 
and 
\[
\left(r(v_2)+r(v_2)^*\right)^{n} \left(u\otimes \left(\frac{c}{|c|^2}v_1-\frac{d}{|d|^2}u\right)\right)\in\mbox{span}\left\{u\otimes \left(\frac{c}{|c|^2}v_1-\frac{d}{|d|^2}u\right) \otimes v_2^{\otimes k}\ \middle|\ k\in\Nn\right\}.
\]\\
Since $\alg(T,T^*,1)=\alg(X_1,X_2,1)$ and $[X_1,X_2]=2i(\Im\langle v_2,v_1\rangle)P$, 
\begin{align*}
\Nc 
&:=\bigvee\left\{(r(v_2)+r(v_2)^*)^{n}(l(v_1)+l(v_1)^*)^{m} \left(u\otimes \left(\frac{c}{|c|^2}v_1-\frac{d}{|d|^2}u\right)\right)\ \middle|\ m,n\in\Nn\right\}\\[1ex]
&=\bigvee\left\{v_1^{\otimes m}\otimes u\otimes \left(\frac{c}{|c|^2}v_1-\frac{d}{|d|^2}u\right) \otimes v_2^{\otimes n}\ \middle|\ m,n\in\Nn\right\}.
\end{align*} 
is a nontrivial reducing subspace of $T$ in $\FEu(\Cpx\langle v_1,v_2\rangle)$ which is orthogonal to $\Cpx\Omega$. Clearly, the restrictions of $l(v_1)+l(v_1)^*$ and $r(v_2)+r(v_2)^*$ to $\Nc$ commute, so the restriction of $T$ to $\Nc$ is normal. 

Now we will characterize the pure part $\overline{\alg(T,T^*,1)\Omega}$ of $T$ in $\FEu(\HEu)$ when $v_1$ and $v_2$ are linearly independent.
\begin{prop}
Let $T=l(v_1)+l(v_1)^*+i(r(v_2)+r(v_2)^*)$ where $\|v_1\|=1$. 
Suppose $v_2 = cv_1+du$ where $c,d\in\Cpx$ are non-zero, $u\perp v_1$ and $\|u\|=1$, and let $w:=\frac{1}{\sqrt{2}}\left(\frac{c}{|c|^2}v_1-\frac{d}{|d|^2}u\right)$. Let $A_n$ be the span of length n tensor products in $\FEu(\Cpx\langle v_1,v_2\rangle)$ for each $n\in\Nn$ and let $A_0=\Cpx\Omega$. Then 
\begin{equation}\label{equ:PP}
\overline{\alg(T,T^*,1)\Omega}=\bigoplus_{n\ge0}\left(A_n\cap \alg(T,T^*,1)\Omega\right)
\end{equation}
and for every $n\in\Nn$,
\begin{equation}\label{equ:PPB}
B_n:=\{v_1^{\otimes n}, v_1^{\otimes n-1}\otimes u, v_1^{\otimes n-2}\otimes u\otimes v_2, \cdots , v_1\otimes u\otimes v_2^{\otimes n-2}, u\otimes v_2^{\otimes n-1}\}
\end{equation}
and
\begin{equation}\label{equ:PPB'}
B'_n:=\{v_2^{\otimes n},w\otimes v_2^{\otimes n-1},v_1\otimes w\otimes v_2^{\otimes n-2},\cdots, v_1^{\otimes n-2}\otimes w\otimes v_2,v_1^{\otimes n-1}\otimes w\}
\end{equation}\\
are orthogonal bases of $A_n\cap \alg(T,T^*,1)\Omega$. Furthermore, we have the obvious isomorphisms 
\begin{align*}
\overline{\alg(T,T^*,1)\Omega}
&\cong \FEu(\Cpx\langle v_1\rangle)\oplus\left(\FEu(\Cpx\langle v_1\rangle)\otimes u\otimes\FEu(\Cpx\langle v_2\rangle)\right)\\[1ex]
&\cong \FEu(\Cpx\langle v_2\rangle)\oplus\left(\FEu(\Cpx\langle v_1\rangle)\otimes w\otimes\FEu(\Cpx\langle v_2\rangle)\right).
\end{align*}
\end{prop}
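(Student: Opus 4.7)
The plan is to identify $A_n\cap\alg(T,T^*,1)\Omega$ with
\[
W_n:=\mathrm{span}\{v_1^{\otimes a}\otimes v_2^{\otimes b}:a+b=n\}\subseteq A_n
\]
for every $n\geq 0$; from this identification the bases $B_n$, $B'_n$, the decomposition~(\ref{equ:PP}), and the two isomorphisms will all follow. Throughout I use $\alg(T,T^*,1)=\alg(X_1,X_2,1)$ and $\Omega\in W_0$.

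The first step is to prove $\alg(X_1,X_2,1)\Omega=\sum_n W_n$. For the inclusion $\subseteq$, I would check that $\bigoplus_n W_n$ is stable under $X_1$ and $X_2$: the creation operators $l(v_1)$ and $r(v_2)$ clearly send $W_n$ into $W_{n+1}$, while $l(v_1)^*$ sends $v_1^{\otimes a}\otimes v_2^{\otimes b}\in W_n$ to $v_1^{\otimes a-1}\otimes v_2^{\otimes b}\in W_{n-1}$ if $a\geq 1$ and to $\bar c\,v_2^{\otimes b-1}\in W_{n-1}$ if $a=0$, and $r(v_2)^*$ is symmetric. For the reverse inclusion, induct on $n$ via the identity
\[
X_1\bigl(v_1^{\otimes a-1}\otimes v_2^{\otimes b}\bigr)=v_1^{\otimes a}\otimes v_2^{\otimes b}+y,\qquad a\geq 1,
\]
where $y\in W_{n-2}$ (or $y=0$ when $n=1$); by the inductive hypothesis both $v_1^{\otimes a-1}\otimes v_2^{\otimes b}$ and $y$ lie in $\alg(X_1,X_2,1)\Omega$, hence so does $v_1^{\otimes a}\otimes v_2^{\otimes b}$, and $v_2^{\otimes n}$ is handled symmetrically by applying $X_2$ to $v_2^{\otimes n-1}$. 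Since $A_n\perp A_m$ for $n\neq m$, this gives $A_n\cap\alg(T,T^*,1)\Omega=W_n$ and, upon passing to closures, yields~(\ref{equ:PP}).

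To prove $B_n$ is an orthogonal basis of $W_n$, I would first observe that $B_n\subseteq W_n$: the substitution $u=\frac{1}{d}(v_2-cv_1)$ expresses each $v_1^{\otimes n-1-k}\otimes u\otimes v_2^{\otimes k}$ as a linear combination of two tensors in $W_n$. For orthogonality, note that any two distinct vectors of $B_n$ differ in at least one position where one carries $v_1$ and the other carries $u$; the inner product then acquires the vanishing factor $\langle v_1,u\rangle=0$. Since $W_n$ is visibly spanned by its $n+1$ generators $v_1^{\otimes a}\otimes v_2^{\otimes b}$, and $B_n$ consists of $n+1$ nonzero pairwise orthogonal vectors of $W_n$, both have dimension $n+1$ and $B_n$ is an orthogonal basis. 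The argument for $B'_n$ is identical after observing that $w\in\mathrm{span}\{v_1,v_2\}$ (so $B'_n\subseteq W_n$) and that $\langle w,v_2\rangle=0$, a short direct computation from the definition of $w$ which is precisely the property $w$ was designed to satisfy; orthogonality of $B'_n$ then uses the vanishing factor $\langle w,v_2\rangle$ in place of $\langle v_1,u\rangle$.

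Finally, the two isomorphisms come from reindexing $\bigcup_n B_n$ (respectively $\bigcup_n B'_n$): the leading vectors $v_1^{\otimes n}$ assemble into $\FEu(\Cpx\langle v_1\rangle)$, while the remaining vectors $v_1^{\otimes m}\otimes u\otimes v_2^{\otimes l}$ (with $(m,l)\in\Nn\times\Nn$) assemble into $\FEu(\Cpx\langle v_1\rangle)\otimes u\otimes\FEu(\Cpx\langle v_2\rangle)$, and analogously for $B'_n$ with $v_2^{\otimes n}$ and $w$ in place of $v_1^{\otimes n}$ and $u$. I expect the main obstacle to be the inductive proof of $\alg(X_1,X_2,1)\Omega=\sum_n W_n$, specifically the careful bookkeeping of the lower-order remainders produced by the annihilation operators $l(v_1)^*$ and $r(v_2)^*$; once this is in hand the rest is orthogonality accounting built on the elementary identities $\langle v_1,u\rangle=0$ and $\langle w,v_2\rangle=0$.
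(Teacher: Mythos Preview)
Your argument is correct and is in fact cleaner than the paper's. The key difference is that you introduce the intermediate space $W_n=\mathrm{span}\{v_1^{\otimes a}\otimes v_2^{\otimes b}:a+b=n\}$ and identify $\alg(X_1,X_2,1)\Omega$ with $\bigoplus_n W_n$ directly, by checking invariance of $\bigoplus_n W_n$ under $X_1,X_2$ and then an easy induction; after that, the fact that $B_n$ (respectively $B'_n$) consists of $n+1$ pairwise orthogonal nonzero vectors in the $(n+1)$-generated space $W_n$ finishes everything by a dimension count. The paper instead never names $W_n$: it builds, for each $n$, an explicit orthogonal basis $Z_n$ of the full space $A_n$ (of dimension $2^n$) containing $B_n$, and shows inductively that $B_n\subseteq\alg(T,T^*,1)\Omega$ while $Z_n\setminus B_n\subseteq\bigl(\alg(T,T^*,1)\Omega\bigr)^\perp$, the latter relying on the reducing subspace $\Nc$ constructed just before the proposition. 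Your route avoids $\Nc$ and the combinatorics of $Z_n$ entirely; the paper's route has the side benefit of producing an explicit orthogonal basis for the normal part $A_n\cap(\alg(T,T^*,1)\Omega)^\perp$ as well, which links the proposition back to the preceding discussion of $\Nc$, but that information is not needed for the statement as written.
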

\begin{proof}
We will prove by induction on $n$ that $B_n$ is an orthogonal basis for $A_n\cap \alg(T,T^*,1)$. This is clear for $n=1$. For $n=2$, consider the orthogonal basis of $A_2$ 
\[
Z_2=\{v_1^{\otimes2}, v_1\otimes u, u\otimes v_2, u\otimes w\}
\]
containing $B_2$. Here, $B_2=\{v_1^{\otimes 2},v_1\otimes u,u\otimes v_2\}\subseteq \alg(T,T^*,1)\Omega$ and $Z_2\setminus B_2=\{u\otimes w\}\subseteq (\alg(T,T^*,1)\Omega)^{\perp}$ as we saw in the above argument describing $\Nc$. Now the assertion is proved for $n=2$. 

Consider another orthogonal basis of $A_2$, $Z'_2=\{v_2^{\otimes 2}, v_2\otimes w, w\otimes v_2, w^{\otimes 2}\}.$ Then $Z_3:=\{v_1\otimes Z_2\}\cup\{u\otimes Z'_2\}$ is an orthogonal basis of $A_3$. Since $\alg(T,T^*,1)\Omega$ is a reducing subspace of $T$, $v_1\otimes B_2\subseteq \alg(T,T^*,1)\Omega$ and $v_1\otimes \{Z_2\setminus B_2\}\subseteq (\alg(T,T^*,1)\Omega)^{\perp}$. In $u\otimes Z'_2$, only $u\otimes v_2^{\otimes 2}$ is contained in $\alg(T,T^*,1)\Omega$ because every tensor product in $\FEu(\Cpx\langle v_1,v_2\rangle)$ which starts with $u$ and ends with $w$ belongs to $\Nc$ and is therefore orthogonal to $\alg(T,T^*,1)\Omega$; moreover $u\otimes w\otimes v_2=(r(v_2)+r(v_2)^*)(u\otimes w)\in \left(\alg(T,T^*,1)\Omega\right)^{\perp}$. Hence, $B_3=\{v_1\otimes B_2\}\cup\{u\otimes v_2^{\otimes 2}\}$ is contained in $\alg(T,T^*,1)\Omega$ and $Z_3\setminus B_3$ is contained in $(\alg(T,T^*,1)\Omega)^{\perp}$. Thus the assertion holds for $n=3$.

The induction step for general $n$ proceeds similarly. For each $n\in\Nn$, construct an orthogonal basis $Z_n$ for $A_n$ as follows.
\[
Z_n=\{v_1^{n}\}\cup\left(\bigcup_{1\le j\le n}\{v_1^{n-j} \otimes u\otimes Z'_{j-1}\}\right),
\]
where $Z'_k$ is the set of all length $k$ tensor products in $\FEu(\HEu)$ whose components consist of $v_2$ and $w$. 
The induction hypothesis is that $B_j$ is an orthogonal basis of $A_j\cap \alg(T,T^*,1)\Omega$ and $Z_j\setminus B_j$ is orthogonal to $\alg(T,T^*,1)\Omega$ for each $1\le j\le n$.
Then $Z_{n+1}=\{v_1\otimes Z_n\}\cup\{u\otimes Z'_n\}$ and it is an orthogonal basis of $A_{n+1}$. Since $\alg(T,T^*,1)\Omega$ is a reducing subspace of $T$ and is invariant under $l(v_1)+l(v_1)^*$, we have
$v_1\otimes B_n=\{v_1^{\otimes n+1},v_1^{\otimes n}\otimes u,v_1^{\otimes n-1}\otimes u\otimes v_2, \cdots ,v_1\otimes u\otimes v_2^{\otimes n-1}\}\subseteq \alg(T,T^*,1)\Omega$ and $v_1\otimes \{Z_n\setminus B_n\} \subseteq (\alg(T,T^*,1)\Omega)^{\perp}$. In $u\otimes Z'_n$, only $u\otimes v_2^{\otimes n}$ is contained in $\alg(T,T^*,1)\Omega$ and the other elements are orthogonal to $\alg(T,T^*,1)\Omega$ by the induction hypothesis. Therefore, $B_{n+1}=\{v_1\otimes B_n\}\cup\{u\otimes v_2^{\otimes n}\}$ is an orthogonal basis for $A_{n+1}\cap \alg(T,T^*,1)\Omega$ and $Z_{n+1}\setminus B_{n+1}$ is an orthogonal basis for $A_{n+1}\cap \left(\alg(T,T^*,1)\Omega\right)^{\perp}$. Thus, for every $n\in\Nn$, $B_n$ is an orthogonal basis for the set of all length $n$ tensor products in $\alg(T,T^*,1)\Omega$. This finishes the proof by induction.

The proof that for $n\in\Nn$, $B'_n$ is also an orthogonal basis for $A_n\cap \alg(T,T^*,1)\Omega$ follows similarly by induction on $n$, using the invariance of $\alg(T,T^*,1)\Omega$ under $r(v_2)+r(v_2)^*$ rather than $l(v_1)+l(v_1)^*$. 

The equality (\ref{equ:PP}) follows by the above proofs.
\end{proof}

Before we further investigate the operator $T=X_1+iX_2$ having $v_1$ and $v_2$ linearly independent, we will take a look at the case when the vectors $v_1$ and $v_2$ are linearly dependent. 
We will refer to the following result.

\begin{thm}[\cite{CM}]
\label{thm:GbyUshift}
If $T$ is a hyponormal operator on $\HEu$, then $C^*(T)$ is generated by the unilateral shift if and only if $T$ is unitarily equivalent to $S$, where $S$ satisfies conditions 
\begin{enumerate}[(i)]
\item $S$ is irreducible;
\item self-commutator $S^*S-SS^*$ is compact;
\item $\sigma_{e}(S)$ is a simple closed curve;
\item $\sigma(S)$ is the closure of $V$, where $V$ is the bounded component of $\Cpx\backslash\sigma_{e}(S)$;
\item for $\lambda\in\sigma(S)\backslash\sigma_{e}(S)$, $\ind(S-\lambda)=-1$.
\end{enumerate}
\end{thm}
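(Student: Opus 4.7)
The plan is to combine Brown--Douglas--Fillmore (BDF) theory for essentially normal operators with the structure of the Toeplitz $C^*$-algebra. A preliminary observation used throughout is that hyponormality of $T$ forces $\ind(T-\lambda) \le 0$, which will resolve any orientation ambiguity in winding-number formulas below. In both directions I take $S = T$ without loss of generality.

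For the necessity direction, assuming $C^*(T) = C^*(U)$ for some unilateral shift $U$, irreducibility of the Toeplitz algebra on $\ell^2(\Nn)$ gives (i), and the quotient exact sequence $0 \to \mathcal{K} \to C^*(U) \to C(\Tb) \to 0$ immediately gives (ii), since $[T^*, T]$ vanishes modulo $\mathcal{K}$. Next, because $T$ generates $C^*(U)$, its image $\pi(T)$ generates $C(\Tb)$ and hence must be injective, which identifies $\sigma_e(T) = \pi(T)(\Tb)$ with a Jordan curve, giving (iii). Conditions (iv) and (v) then follow from the Toeplitz index formula $\ind(T-\lambda) = -\mathrm{wind}(\pi(T), \lambda)$, combined with hyponormality and the Jordan curve theorem, together with $\sigma(T) = \sigma_e(T) \cup \{\lambda : \ind(T-\lambda) \neq 0\}$.

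For the sufficiency direction, assuming (i)--(v), I would use BDF to identify $C^*(T)$ spatially with the Toeplitz algebra. By (ii), $T$ is essentially normal, so $C^*(T)/\mathcal{K}$ is commutative; by (iii) its Gelfand spectrum is the Jordan curve $\Gamma = \sigma_e(T)$, hence $C^*(T)/\mathcal{K} \cong C(\Gamma)$. After choosing a homeomorphism $h \colon \Gamma \to \Tb$ with orientation sending $V$ to the unit disk, condition (v) identifies the index function of the extension $0 \to \mathcal{K} \to C^*(T) \to C(\Gamma) \to 0$ with that of the Toeplitz extension. The BDF classification of essential extensions of $C(\Tb)$ by $\mathcal{K}$---essentiality coming from (i)---then produces a unitary $W$ with $W C^*(T) W^* = C^*(U)$; pulling $U$ back through $W$ yields a unilateral shift inside $C^*(T)$ generating the whole algebra.

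The main obstacle will be this BDF step. Identifying the extension class abstractly from the index data is routine, but realizing the equivalence spatially---so that $C^*(T)$ actually contains a unilateral shift rather than being only abstractly isomorphic to the Toeplitz algebra---requires Voiculescu's absorption theorem, and it is precisely the irreducibility hypothesis (i) that upgrades the extension to an essential one so that absorption may be applied.
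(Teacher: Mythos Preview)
The paper does not prove this theorem: it is quoted verbatim from Conway--McGuire \cite{CM} and used as a black box in the Example that follows. There is therefore no proof in the paper to compare your proposal against.

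That said, your outline is essentially the Conway--McGuire argument. The necessity direction is exactly as you describe: the Toeplitz algebra is irreducible, contains the compacts, has commutative quotient $C(\Tb)$, and the image of $T$ must generate $C(\Tb)$ and hence be an injective continuous function on $\Tb$, giving the Jordan curve and the index computation. For sufficiency, conditions (ii)--(v) say that $T$ is essentially normal with essential spectrum a Jordan curve and index $-1$ inside, so the BDF extension class matches that of the unilateral shift; condition (i) guarantees the extension is essential (equivalently, that the compacts lie in $C^*(T)$), and then the BDF/Voiculescu machinery yields a spatial isomorphism $C^*(T)\cong C^*(U)$ implemented by a unitary. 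Your identification of the one delicate point---that irreducibility is needed to pass from abstract isomorphism to a unilateral shift actually inside $C^*(T)$---is correct.

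One small caveat: in the necessity direction, the assertion that $\pi(T)$ injective on $\Tb$ forces $\pi(T)(\Tb)$ to be a \emph{simple} closed curve (i.e., a Jordan curve) is exactly the statement that a continuous injection from $\Tb$ into $\Cpx$ is a homeomorphism onto its image, which is fine by compactness; you should also note that (iv) uses hyponormality to rule out isolated eigenvalues outside $\overline{V}$ (a hyponormal operator has no isolated points in its spectrum except eigenvalues of finite multiplicity, and those would violate the index computation). These are routine but worth making explicit.
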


\begin{example}
Let $v_1=\alpha v_2$, $\alpha\in\Cpx$, $\Im\alpha\not=0$, and $\|v_2\|=1$. Let $T$ be given by  
\begin{align*}
T
&=l(v_1)+l(v_1)^*+i(r(v_2)+r(v_2)^*)\\
&=(\alpha l(v_2)+ir(v_2))+(\bar{\alpha}l(v_2)^*+ir(v_2)^*)
\end{align*}
on $\FEu(\Cpx)$. Then,  
\[
T(\Omega)=(\alpha+i)v_2
\]
and for each $n\in\Nn$, 
\[
T(v_2^{\otimes n})=(\alpha+i)v_2^{\otimes n+1}+(\bar{\alpha}+i)v_2^{\otimes n-1}.
\]
Therefore, 
\[
T=(\alpha+i)U+(\bar{\alpha}+i)U^*,
\]
where $U$ is the unilateral shift on $\FEu(\Cpx)$. 

If $\alpha=i$, then $T=2iU$ and $[T^*,T]=4P$ so that $T$ is a hyponormal operator. If $\alpha=-i$, then $T=2iU^*$ and $[T^*,T]=-4P$, so $T$ is cohyponormal. 

Since the image of the unilateral shift $U$ in the Calkin algebra is a normal operator, by the functional calculus, we have  
\begin{align*}
\sigma_{e}((\alpha+i)U+(\bar{\alpha}+i)U^*)
&=\{(\alpha+i)t+(\bar{\alpha}+i)\bar{t}\ |\ t\in\sigma_{e}(U)\}\\
&=\{\alpha t+\overline{\alpha t}+i(t+\bar{t})\ |\ t\in\Tb\}.
\end{align*}
This curve is the solution set of
\begin{equation}\label{equ:Ellipse}
x^2+|\alpha|^2y^2-2(\Re\alpha)xy=4(\Im\alpha)^2
\end{equation}
in the $xy$-plane,
which is an ellipse centered at the origin.
So the essential spectrum of $T$ is a simple closed curve. Let $V_0$ be the bounded component of $\Cpx\backslash\sigma_{e}(T)$. Then by Theorem \ref{thm:GbyUshift}, we have
\[
\sigma(T)=\overline{V_0},
\]
and for $\lambda\in\sigma(T)\backslash\sigma_{e}(T)$, 
\[
\ind(T-\lambda)=\begin{cases}
-1,&\Im(\alpha)>0 \\
1,&\Im(\alpha)<0.
\end{cases}
\]
Thus, the principal function is the characteristic function of the interior of the ellipse (\ref{equ:Ellipse}) when $\Im\alpha<0$, and is the negative of this when $\Im\alpha>0$.
\end{example}

\subsection{} 
In the rest of this paper, we consider the pure part of $T=X_1+iX_2$ acting on $\overline{\alg(T,T^*,1)\Omega}$ where $X_1=l(v_1)+l(v_1)^*$ and $X_2=r(v_2)+r(v_2)^*$. So $T$ is a completely non-normal operator.

Now we will find a formula for the principal function of $T$ when $v_1$ and $v_2$ are linearly independent. For this, we will use equation (\ref{equ:PF}); so we will first establish a formula for $\det E(l,s)$ of $T$. Suppose $l\in\Cpx\setminus\sigma(X_2)$ and $s\in\Cpx\setminus\sigma(X_1)$. From (\ref{equ:DF}), we have
\begin{align}\label{equ:DET0}
\det E(l,s)
&=\det((X_2-l)(X_1-s)(X_2-l)^{-1}(X_1-s)^{-1})\notag \\[.5ex]
&=\det(((X_1-s)(X_2-l)-2\Im\langle v_2,v_1\rangle i P)(X_2-l)^{-1}(X_1-s)^{-1})\notag \\[.5ex]
&=\det(1-2\Im\langle{v_2,v_1}\rangle i P(X_2-l)^{-1}(X_1-s)^{-1})\notag \\[.5ex]
&=\det(1-2\Im\langle{v_2,v_1}\rangle i P^{2}(X_2-l)^{-1}(X_1-s)^{-1}\notag) \\[.5ex]
&=\det(1-2\Im\langle{v_2,v_1}\rangle i P(X_2-l)^{-1}(X_1-s)^{-1}P)\notag \\[.5ex]
&=\det(1-2\Im\langle v_2,v_1 \rangle i \varphi_{\Omega}((X_2-l)^{-1}(X_1-s)^{-1})P)\notag \\[.5ex]
&=1-2\Im\langle v_2,v_1 \rangle i \varphi_{\Omega}((X_2-l)^{-1}(X_1-s)^{-1})\notag \\[.5ex]
&=1-2\Im\langle v_2,v_1 \rangle i \overline{\varphi_{\Omega}((\bar{s}-X_1)^{-1}(\bar{l}-X_2)^{-1})}\notag \\[.5ex]
&=1-2\Im\langle v_2,v_1 \rangle i \overline{G_{(X_1,X_2)}(\bar{s},\bar{l})}
\end{align}
where $G_{(X_1,X_2)}(z,w)=\varphi((z-X_1)^{-1}(w-X_2)^{-1})$. Note that $G_{(X_1,X_2)}(z,w)$ is the germ of a holomorphic function near $(\infty,\infty)$
in $\Cpx_{\infty}\times\Cpx_{\infty}$ (see \cite{Voi2}).

\subsection{} We review the definition and formula of the partial bi-free R-transform, $R_{(a,b)}(z,w)$, defined in \cite{Voi2} and find $\det E(l,s)$ in terms of $l$ and $s$.

\begin{defi}[\cite{Voi2}]
Let $(a,b)$ be a two-faced pair of non-commutative random variables in $(\Ac,\varphi)$. Set $I=\{i\}$ and $J=\{j\}$ and
suppose $\alpha:\{1,\cdots,m+n\}\to I\amalg J$ is given by $\alpha(k)=i$ if $1\le k\le m$ and $\alpha(k)=j$ if $m+1\le k\le m+n$. We shall denote the bi-free cummulant $R_\alpha$ as $R_{m,n}$. The {\em partial bi-free R-transform} is the generating series
\[
R_{(a,b)}(z,w)=\sum\limits_{\substack{m\ge 0, n\ge 0\\ m+n\ge 1}}R_{m,n}(a,b)z^mw^n.
\]
\end{defi}
\begin{thm} [Theorem 2.4 of \cite{Voi2}]\label{thm:Rtransform}
We have the equality of germs of holomorphic functions near $(0,0)\in \Cpx^2$,
\begin{equation*}
R_{(a,b)}(z,w)=1+zR_{a}(z)+wR_{b}(w)-\frac{zw}{G_{(a,b)}(K_{a}(z),K_{b}(w))},
\end{equation*}
where $R_a(z)$ and $R_b(w)$ are one variable R-transforms, and $K_a(z)=z^{-1}+R_a(z)$ and $K_b(w)=w^{-1}+R_b(w)$.
\end{thm}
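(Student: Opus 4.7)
The plan is to establish the identity by combining the bi-free moment--cumulant relations with careful manipulation of the two-variable generating functions. I would first rewrite the claimed formula in the equivalent form
\begin{equation*}
G_{(a,b)}(K_a(z),K_b(w))=\frac{zw}{1+zR_a(z)+wR_b(w)-R_{(a,b)}(z,w)},
\end{equation*}
which displays both sides as formal power series in $\Cpx[[z,w]]$; since $K_a(z)=z^{-1}+R_a(z)$ has a simple pole at $0$, the substitution places $G_{(a,b)}$ in its natural domain of convergence near $(\infty,\infty)$. Matching coefficients of $z^m w^n$ then reduces the claim to a purely formal identity between polynomials in the mixed moments $\varphi(a^p b^q)$ and the cumulants $R_{p,q}(a,b)$.

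Next I would invoke the bi-free moment--cumulant formula that expands $\varphi(a^m b^n)$ as a sum over bi-non-crossing partitions $\pi$ of the ordered set comprising $m$ left points followed by $n$ right points, weighted by $\prod_{V\in\pi}R_{m_V,n_V}(a,b)$, where $(m_V,n_V)$ counts the left/right indices in the block $V$. Substituting this expansion into $G_{(a,b)}(z',w')=\sum_{m,n\ge 0}\varphi(a^m b^n)z'^{-m-1}w'^{-n-1}$ and then setting $z'=K_a(z)$, $w'=K_b(w)$, the classical one-variable moment--cumulant identity already explains how the "pure-left" blocks of $\pi$ (those with $n_V=0$) re-sum against the powers of $K_a(z)$ to reproduce the one-variable transform $R_a$, and symmetrically for $R_b$. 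The remaining "mixed" blocks, having both $m_V\ge 1$ and $n_V\ge 1$, are exactly what is collected by $R_{(a,b)}(z,w)-zR_a(z)-wR_b(w)$, once one keeps track of the overall $zw$ normalization coming from the two resolvent factors.

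The main obstacle is justifying this combinatorial re-summation, because bi-non-crossing partitions are subtler than ordinary non-crossing ones: the natural order on the right-hand indices is reversed relative to that on the left, so mixed blocks straddle the boundary of the two orderings. One must check that blocks group into "connected components" along this boundary in a manner compatible with both the factorization of the Cauchy transform and the $K$-inversion. A cleaner alternative would be to reduce to a universal model---taking $(a,b)$ to be a generic bi-free pair realized by creation and annihilation operators on a full Fock space, in the spirit of Theorem~\ref{thm:CLT}---verify the identity there by direct computation, and then extend to arbitrary $(a,b)$ using the fact that both sides are formal power series determined entirely by the joint moments of $(a,b)$.
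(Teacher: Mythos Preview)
The paper does not supply a proof of this theorem: it is quoted as Theorem~2.4 of \cite{Voi2} and used as a black box to derive the formula~(\ref{equ:VoiGfunction}) for $G_{(X_1,X_2)}$. There is therefore nothing in the present paper to compare your argument against.

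As for the substance of your sketch, the overall shape is reasonable---rewriting the identity as a formula for $G_{(a,b)}(K_a(z),K_b(w))$ and invoking the bi-free moment--cumulant relations is indeed how Voiculescu proceeds in \cite{Voi2}. But be aware that your description of the combinatorics is not quite right: the bi-free cumulants $R_{m,n}$ indexed only by the pair $(m,n)$ are not the general bi-free cumulants $R_\alpha$, but the specific ones attached to the word $a^m b^n$ (all left variables first, then all right variables). The full moment--cumulant expansion of $\varphi(a^m b^n)$ involves cumulants $R_\alpha$ for all bi-non-crossing blocks, and these do \emph{not} in general reduce to products of quantities of the form $R_{p,q}$; the key point in Voiculescu's argument is that for the particular word shape $a^m b^n$ the bi-non-crossing structure forces exactly the factorization one needs. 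Your ``cleaner alternative'' via a universal Fock model would not work as stated either, since Theorem~\ref{thm:CLT} only realizes bi-free central limit distributions (those with vanishing higher cumulants), not arbitrary two-faced pairs; a genuinely universal argument would require a Fock model with amalgamation or a formal-variable construction, which is again essentially what \cite{Voi2} does.
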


For the given two-faced pair $(X_1,X_2)$, the definition of the partial bi-free R-transform and Lemma 7.2 of \cite{Voi1} give
\begin{align}\label{equ:DRtransform}
R_{(X_1,X_2)}(z,w) 
&=R_{2,0}(X_1,X_2)+R_{0,2}(X_1,X_2)+R_{1,1}(X_1,X_2)\notag\\[1ex]
&=\varphi(X_1^2)z^2+\varphi(X_2^2)w^2+\varphi(X_1X_2)zw\notag\\[1ex]
&=\|v_1\|^2z^2+\|v_2\|^2w^2+\langle v_2,v_1\rangle zw.
\end{align}
From the formula for the partial bi-free R-transform in Theorem \ref{thm:Rtransform}, we also have
\begin{equation}\label{equ:FRtransform}
 R_{(X_1,X_2)}(z,w)
 =1+\|v_1\|^2z^2+\|v_2\|^2w^2-\frac{zw}{G_{(X_1,X_2)}(\frac{1}{z}+\|v_1\|^2z,\frac{1}{w}+\|v_2\|^2w)}.
\end{equation}

Denoting 
\[
t_1=\frac{1}{z}+\|v_1\|^2z \hspace{.7em}\mbox{ and }\hspace{.7em} t_2=\frac{1}{w}+\|v_2\|^2w,
\]
for $z, w\in\Cpx\setminus\{0\}$ close to 0, we have 
\[
z=\frac{t_1-\sqrt{t_1^2-4\|v_1\|^2}}{2\|v_1\|^2} \hspace{.7em}\mbox{ and }\hspace{.7em} w=\frac{t_2-\sqrt{t_2^2-4\|v_2\|^2}}{2\|v_2\|^2},
\]
where the branches of the square roots are $\sqrt{t_1^2-4\|v_1\|^2}\approx t_1$ and $\sqrt{t_2^2-4\|v_2\|^2}\approx t_2$ for $|t_1|$ and $|t_2|$ large. 
From the formulas (\ref{equ:DRtransform}) and (\ref{equ:FRtransform}), we get
\begin{align}\label{equ:VoiGfunction}
G_{(X_1,X_2)}(t_1,t_2)
&=\frac{zw}{1-\langle v_2,v_1\rangle zw}\notag\\[1ex]
&=\frac{(t_1-\sqrt{t_1^2-4\|v_1\|^2})(t_2-\sqrt{t_2^2-4\|v_2\|^2})}{4\|v_1\|^2\|v_2\|^2-\langle v_2,v_1\rangle(t_1-\sqrt{t_1^2-4\|v_1\|^2})(t_2-\sqrt{t_2^2-4\|v_2\|^2})},
\end{align}
when $|t_1|$ and $|t_2|$ are large. 

Let 
\begin{equation}\label{equ:Q}
q(t)=\frac{t-\sqrt{t^2-4}}{2}\qquad(t\in\Cpx\setminus[-2,2]).
\end{equation}
The function $z\mapsto z+\frac{1}{z}$ sends the punctured unit disk $\{z\ |\ 0<|z|<1\}$ biholomorphically onto $\Cpx\setminus[-2,2]$. The function $q$ is its inverse with respect to composition. We deduce that the identity $q(t)=\overline{q(\bar{t})}$ holds for all $t\in\Cpx\setminus[-2,2]$.

By (\ref{equ:DET0}) and (\ref{equ:VoiGfunction}), for $|l|$ and $|s|$ large, we have 
\begin{align}\label{equ:DET1}
\det E(l,s)
&=1-2(\Im\langle v_2,v_1\rangle)i \left(\overline{\frac{(\bar{s}-\sqrt{\bar{s}^2-4\|v_1\|^2})(\bar{l}-\sqrt{\bar{l}^2-4\|v_2\|^2})}{4\|v_1\|^2\|v_2\|^2-\langle v_2,v_1\rangle(\bar{s}-\sqrt{\bar{s}^2-4\|v_1\|^2})(\bar{l}-\sqrt{\bar{l}^2-4\|v_2\|^2})}}\right)\notag \displaybreak[2] \\[2ex]
&=1+ \frac{2(\Im\langle v_1,v_2\rangle) i(s-\sqrt{s^2-4\|v_1\|^2})(l-\sqrt{l^2-4\|v_2\|^2})}{4\|v_1\|^2\|v_2\|^2-\langle v_1,v_2\rangle(s-\sqrt{s^2-4\|v_1\|^2})(l-\sqrt{l^2-4\|v_2\|^2})},\hspace{1.5em}\notag \displaybreak[2] \\[2ex]
&=\frac{4\|v_1\|^2\|v_2\|^2-\overline{\langle v_1,v_2\rangle}(s-\sqrt{s^2-4\|v_1\|^2})(l-\sqrt{l^2-4\|v_2\|^2})}{4\|v_1\|^2\|v_2\|^2-\langle v_1,v_2\rangle(s-\sqrt{s^2-4\|v_1\|^2})(l-\sqrt{l^2-4\|v_2\|^2})}\notag\\[2ex]
&=\frac{1-\frac{\bar{\alpha}}{\|v_1\|\|v_2\|}q\left(\frac{s}{\|v_1\|}\right)q\left(\frac{l}{\|v_2\|}\right)}{1-\frac{\alpha}{\|v_1\|\|v_2\|}q\left(\frac{s}{\|v_1\|}\right)q\left(\frac{l}{\|v_2\|}\right)},
\end{align}
where $\alpha=\langle v_1,v_2\rangle$, and for the second equality, we have used 
\[
\overline{(\bar{s}-\sqrt{\bar{s}^2-4\|v_1\|^2})}=2\|v_1\|\overline{q\left(\bar{\frac{s}{\|v_1\|}}\right)}
=2\|v_1\|q\left(\frac{s}{\|v_1\|}\right)=s-\sqrt{s^2-4\|v_1\|^2}
\] and 
\[
\overline{(\bar{l}-\sqrt{\bar{l}^2-4\|v_2\|^2})}=2\|v_2\|\overline{q\left(\bar{\frac{l}{\|v_2\|}}\right)}=2\|v_2\|q\left(\frac{l}{\|v_2\|}\right)=l-\sqrt{l^2-4\|v_2\|^2}.
\]

Since $v_1$ and $v_2$ are linearly independent, $|\alpha|<\|v_1\|\|v_2\|$. Since $\left|q\left(\frac{s}{\|v_1\|}\right)\right|<1$ and $\left|q\left(\frac{l}{\|v_2\|}\right)\right|<1$ for $s\in\Cpx\setminus[-2\|v_1\|,2\|v_1\|]$ and $l\in\Cpx\setminus[-2\|v_2\|,2\|v_2\|]$, the numerator and denominator in (\ref{equ:DET1}) do not vanish for such $s$ and $l$. So the right-hand side of (\ref{equ:DET1}) is a holomorphic function there. Since by definition in (\ref{equ:DET0}), $\det E(l,s)$ is holomorphic on $(\Cpx_{\infty}\backslash\sigma(X_2))\times(\Cpx_{\infty}\backslash\sigma(X_1))$, it follows from the analytic continuation that the formula of $\det E(l,s)$ in (\ref{equ:DET1}) holds for all $s\in\Cpx\setminus[-2\|v_1\|,2\|v_1\|]$ and $l\in\Cpx\setminus[-2\|v_2\|,2\|v_2\|]$.

\subsection{} In this subsection, we find the formula of the principal function $g(\delta,\gamma)$ of $T$ by using the formula (\ref{equ:DET1}). The principal function $g$ was defined on $\R^2$ by 
\[
\det E(l,s)=\exp\left(\frac{1}{2\pi i}\int_{\R}\int_{\R} g(\delta, \gamma) \frac{d\delta}{\delta-l}\frac{d\gamma}{\gamma-s}\right)
\]
and $\supp(g)\subseteq\{(\delta,\gamma)\in\R^2\ |\ \gamma+i\delta\in\sigma(T)\}$. To find the principal function of $T$, consider the function $f$ defined by 
\[
f(l,\gamma)=\int_{\R} g(\delta,\gamma)\frac{d\delta}{\delta-l}.
\]
for $l\in\Cpx\backslash\sigma(X_2)$ and $\gamma\in\R$. Fixing $\gamma\in\R$, $f(l,\gamma)$ is a holomorphic function for $l\in\Cpx\setminus\sigma(X_2)$. From (\ref{equ:DET1}) and the definition of $g(\delta,\gamma)$, we have
\begin{equation}\label{equ:F}
\int_{\R} f(l,\gamma) \frac{d\gamma}{\gamma-s}=(2\pi i) \Log\left(\frac{1-\frac{\bar{\alpha}}{\|v_1\|\|v_2\|}q\left(\frac{s}{\|v_1\|}\right)q\left(\frac{l}{\|v_2\|}\right)}{1-\frac{\alpha}{\|v_1\|\|v_2\|}q\left(\frac{s}{\|v_1\|}\right)q\left(\frac{l}{\|v_2\|}\right)}\right),
\end{equation}\\
where $s\in\Cpx_{\infty}\backslash\sigma(X_1)$ and $l\in\Cpx_{\infty}\backslash\sigma(X_2)$. Now we will find the function $f(l,\gamma)$ by using the {\em Stieltjes inversion formula}. 

We defined the function $q(t)$ for $t\in\Cpx\setminus[-2,2]$ in (\ref{equ:Q}).
\begin{lemma}\label{lem:QLIM}
If $t_0\in[-2,2]$, then
\[
\lim_{\epsilon\searrow 0}q(t_0+i\epsilon)=\frac{t_0-i\sqrt{4-t_0^2}}{2}.
\]
\end{lemma}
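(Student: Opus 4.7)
The approach is to exploit the description of $q$ given just before the lemma: $q$ is the compositional inverse of $\zeta\mapsto \zeta+\zeta^{-1}$, which is a biholomorphism from the punctured open unit disk $\{0<|\zeta|<1\}$ onto $\Cpx\setminus[-2,2]$. Thus $q(t_0+i\epsilon)$ is the unique $\zeta_\epsilon$ with $|\zeta_\epsilon|<1$ satisfying $\zeta_\epsilon+\zeta_\epsilon^{-1}=t_0+i\epsilon$, and the lemma reduces to identifying $\lim_{\epsilon\searrow 0}\zeta_\epsilon$.

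For $t_0\in(-2,2)$, I would first write $t_0=2\cos\theta$ with $\theta\in(0,\pi)$. The two solutions of $\zeta+\zeta^{-1}=t_0$ are the unit-modulus numbers $e^{\pm i\theta}$, both on the boundary of the disk, so $\zeta_\epsilon$ must accumulate on $\{e^{i\theta},e^{-i\theta}\}$. To decide which of the two is the limit, write $\zeta=re^{i\phi}$ with $0<r<1$ and compute
\[
\zeta+\zeta^{-1}=(r+r^{-1})\cos\phi+i(r-r^{-1})\sin\phi.
\]
Since $r-r^{-1}<0$, the condition $\Im(\zeta+\zeta^{-1})>0$ forces $\sin\phi<0$. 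Consequently, for every sufficiently small $\epsilon>0$ the point $\zeta_\epsilon$ lies in the open lower half of the unit disk, and by continuity $\zeta_\epsilon\to e^{-i\theta}=\cos\theta-i\sin\theta=\frac{t_0}{2}-i\frac{\sqrt{4-t_0^2}}{2}$, which is the claimed value. For the endpoints $t_0=\pm 2$ the right-hand side of the lemma is $\pm 1$; this follows directly from (\ref{equ:Q}) since $(t_0+i\epsilon)^2-4\to 0$, so $q(t_0+i\epsilon)\to t_0/2=\pm 1$.

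The only step demanding genuine care is the determination of which half of the unit disk the branch $q$ sends the upper half-plane into; once one observes that this is dictated entirely by the sign of $r-r^{-1}$ for $0<r<1$, the remaining manipulation is routine trigonometry. An alternative that avoids the substitution would be to analyze the branch of $\sqrt{t^2-4}$ directly by tracking $\arg((t_0+i\epsilon)^2-4)$, but the $\zeta+\zeta^{-1}$ picture makes the sign choice transparent and should be preferred.
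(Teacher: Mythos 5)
Your proof is correct, but it takes a genuinely different route from the paper's. The paper argues directly from the explicit formula \eqref{equ:Q}: it writes $(t_0+i\epsilon)^2-4=-(4+\epsilon^2-t_0^2)+2i\epsilon t_0$ and determines the branch of the square root from its normalization $\sqrt{t^2-4}\approx t$ for $|t|$ large, concluding that the square root tends to $+i\sqrt{4-t_0^2}$ as $\epsilon\searrow 0$; this is exactly the ``track $\arg((t_0+i\epsilon)^2-4)$'' alternative you set aside. You instead use the characterization of $q$ as the compositional inverse of $\zeta\mapsto\zeta+\zeta^{-1}$ on the punctured unit disk, and the identity $\Im(\zeta+\zeta^{-1})=(r-r^{-1})\sin\phi$ to see that $q$ sends the upper half-plane into the open lower half-disk; together with the compactness argument (accumulation points of $\zeta_\epsilon$ cannot be $0$ because $\zeta+\zeta^{-1}$ blows up there, hence they solve $\zeta+\zeta^{-1}=t_0$ and lie in $\{e^{i\theta},e^{-i\theta}\}$, and only $e^{-i\theta}$ is compatible with $\Im\zeta_\epsilon<0$), this identifies the limit without ever fixing a branch of $\sqrt{t^2-4}$. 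Your separate treatment of the endpoints $t_0=\pm2$ (which the paper's proof does not spell out, restricting to $t_0\in(-2,2)$) is also fine, since any branch satisfies $|\sqrt{w}|=|w|^{1/2}\to 0$. As for what each approach buys: the paper's computation is shorter and stays within the branch convention used in the subsequent formulas such as \eqref{equ:DET1}, whereas your conformal-map argument makes the sign choice structurally transparent and immune to branch bookkeeping; the only polish needed is to state the ``by continuity'' step explicitly as the bounded-sequence/unique-accumulation-point argument indicated above.
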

\begin{proof}
For $t_0\in(-2,2)$,
\begin{align*}
\lim_{\epsilon\searrow 0}q(t_0+i\epsilon)
&=\lim_{\epsilon\searrow 0}\frac{t_0+i\epsilon-\sqrt{(t_0+i\epsilon)^2-4}}{2}\\[1ex]
&=\lim_{\epsilon\searrow 0}\frac{t_0+i\epsilon-\sqrt{-(4+\epsilon^2-t_0^2)+2i\epsilon t_0}}{2}\\[1ex]
&=\frac{t_0-i\sqrt{4-t_0^2}}{2}.
\end{align*}
For, when $\epsilon$ is large and positive, the branch of a square root is such that $\sqrt{-(4+\epsilon^2-t_0^2)+2i\epsilon t_0}\approx t_0+i\epsilon$. So 
$\lim_{\epsilon\searrow 0}\sqrt{-(4+\epsilon^2-t_0^2)+2i\epsilon t_0}=i\sqrt{4-t_0^2}$. 
\end{proof}

Define a function $\zeta(t)$ for $t\in[-2,2]$ by
\[
\zeta(t)=\frac{t-i\sqrt{4-t^2}}{2}.
\]
Then $\zeta(t)\in\Tb$ for $t\in[-2,2]$, where $\Tb$ is a unit circle in $\Cpx$. By Lemma \ref{lem:QLIM}, the limit of $q(t+i\epsilon)$ goes to $\zeta(t)$ as $\epsilon\searrow 0$, where $t\in[-2,2]$. Then we have for $\gamma\in[-2\|v_1\|,2\|v_1\|]$, 
\begin{equation}\label{equ:QLIM1}
\lim_{\epsilon\searrow0}q\left(\frac{\gamma}{\|v_1\|}+i\frac{\epsilon}{\|v_1\|}\right)
=\frac{\frac{\gamma}{\|v_1\|}-i\sqrt{4-\left(\frac{\gamma}{\|v_1\|}\right)^2}}{2}
=\zeta\left(\frac{\gamma}{\|v_1\|}\right)\in\Tb.
\end{equation}

Fix $l\in\R\setminus[-2\|v_2\|,2\|v_2\|]$. Since clearly $f(l,\gamma)=0$ for $\gamma\in\R\setminus\sigma(X_1)$, we suppose $\gamma\in\sigma(X_1)$. Using (\ref{equ:F}), the Stieltjes inversion formula, and (\ref{equ:QLIM1}), we have
\begin{align}
f(l, \gamma)
&=\frac{1}{\pi} \lim_{\epsilon\searrow 0}\Im\left((2\pi i) \Log\left(\frac{1-\frac{\bar{\alpha}}{\|v_1\|\|v_2\|}q\left(\frac{\gamma}{\|v_1\|}+i\frac{\epsilon}{\|v_1\|}\right)q\left(\frac{l}{\|v_2\|}\right)}{1-\frac{\alpha}{\|v_1\|\|v_2\|}q\left(\frac{\gamma}{\|v_1\|}+i\frac{\epsilon}{\|v_1\|}\right)q\left(\frac{l}{\|v_2\|}\right)}\right)\right)\label{equ:fStieltjes}\\[2ex]
&=2\log\left|\frac{1-\frac{\bar{\alpha}}{\|v_1\|\|v_2\|}\zeta\left(\frac{\gamma}{\|v_1\|}\right)q\left(\frac{l}{\|v_2\|}\right)}{1-\frac{\alpha}{\|v_1\|\|v_2\|}\zeta\left(\frac{\gamma}{\|v_1\|}\right)q\left(\frac{l}{\|v_2\|}\right)}\right|\notag\\[2ex]
&=\log\frac{\bigg(1-\frac{\bar{\alpha}}{\|v_1\|\|v_2\|}\zeta\left(\frac{\gamma}{\|v_1\|}\right)q\left(\frac{l}{\|v_2\|}\right)\bigg)
\left(1-\frac{\alpha}{\|v_1\|\|v_2\|}\overline{\zeta\left(\frac{\gamma}{\|v_1\|}\right)}q\left(\frac{l}{\|v_2\|}\right)\right)}{\bigg(1-\frac{\alpha}{\|v_1\|\|v_2\|}\zeta\left(\frac{\gamma}{\|v_1\|}\right)q\left(\frac{l}{\|v_2\|}\right)\bigg)
\left(1-\frac{\bar{\alpha}}{\|v_1\|\|v_2\|}\overline{\zeta\left(\frac{\gamma}{\|v_1\|}\right)}q\left(\frac{l}{\|v_2\|}\right)\right)}\notag\\[2ex]
&=\Log\Bigg(1-\frac{\bar{\alpha}}{\|v_1\|\|v_2\|}\zeta\left(\frac{\gamma}{\|v_1\|}\right)q\left(\frac{l}{\|v_2\|}\right)\Bigg)
+\Log\left(1-\frac{\alpha}{\|v_1\|\|v_2\|}\overline{\zeta\left(\frac{\gamma}{\|v_1\|}\right)}q\left(\frac{l}{\|v_2\|}\right)\right)\notag\\[2ex]
&\hspace{1em}-\Log\Bigg(1-\frac{\alpha}{\|v_1\|\|v_2\|}\zeta\left(\frac{\gamma}{\|v_1\|}\right)q\left(\frac{l}{\|v_2\|}\right)\Bigg)
-\Log\left(1-\frac{\bar{\alpha}}{\|v_1\|\|v_2\|}\overline{\zeta\left(\frac{\gamma}{\|v_1\|}\right)}q\left(\frac{l}{\|v_2\|}\right)\right),\label{equ:f}
\end{align}
where Log is the principal branch of the logarithm. This equality holds where $\gamma\in\sigma(X_1)$ and $l\in\R\setminus[-2\|v_2\|,2\|v_2\|]$. 

Fix $\gamma\in\sigma(X_1)$. Since each expression appearing as an argument of Log, above, remains in the disk of radius 1 centered at 1 for $l\in\Cpx\setminus[-2\|v_2\|,2\|v_2\|]$. So the expression (\ref{equ:f}) is holomorphic on $\Cpx\setminus[-2\|v_2\|,2\|v_2\|]$. The equality (\ref{equ:fStieltjes}) was derived for $l\in \R\setminus[-2\|v_2\|,2\|v_2\|]$, but as defined, $f(l,\gamma)$ is holomorphic in $\Cpx\setminus[-2\|v_2\|,2\|v_2\|]$. By analytic continuation, we have 
\begin{multline*}
\int_{\R} g(\delta,\gamma)\frac{d\delta}{\delta-l} \\
=\Log\Bigg(1-\frac{\bar{\alpha}}{\|v_1\|\|v_2\|}\zeta\left(\frac{\gamma}{\|v_1\|}\right)q\left(\frac{l}{\|v_2\|}\right)\Bigg)
+\Log\left(1-\frac{\alpha}{\|v_1\|\|v_2\|}\overline{\zeta\left(\frac{\gamma}{\|v_1\|}\right)}q\left(\frac{l}{\|v_2\|}\right)\right)\\[1ex]
-\Log\Bigg(1-\frac{\alpha}{\|v_1\|\|v_2\|}\zeta\left(\frac{\gamma}{\|v_1\|}\right)q\left(\frac{l}{\|v_2\|}\right)\Bigg)
-\Log\left(1-\frac{\bar{\alpha}}{\|v_1\|\|v_2\|}\overline{\zeta\left(\frac{\gamma}{\|v_1\|}\right)}q\left(\frac{l}{\|v_2\|}\right)\right)
\end{multline*}
for $\gamma\in\sigma(X_1)$ and $l\in\Cpx\backslash\sigma(X_2)$. 

Now we will apply the Stieltjes inversion formula to $f(l,\gamma)$ in order to recover the principal function $g(\delta,\gamma)$ of $T$. Since we have $\lim_{\epsilon\searrow0}q\left(\frac{\delta}{\|v_2\|}+i\frac{\epsilon}{\|v_2\|}\right)=\zeta\left(\frac{\delta}{\|v_2\|}\right)$ for $\delta\in\sigma(X_2)$ as in (\ref{equ:QLIM1}), we get
\begin{align*}
g(\delta,\gamma)&=\frac{1}{\pi}\lim_{\epsilon\searrow 0}\Im f(\delta+i\epsilon,\gamma)\\[1ex]
&=\frac{1}{\pi}\Arg\Bigg(1-\frac{\bar{\alpha}}{\|v_1\|\|v_2\|}\zeta\left(\frac{\gamma}{\|v_1\|}\right)\zeta\left(\frac{\delta}{\|v_2\|}\right)\Bigg)
+\frac{1}{\pi}\Arg\left(1-\frac{\alpha}{\|v_1\|\|v_2\|}\overline{\zeta\left(\frac{\gamma}{\|v_1\|}\right)}\zeta\left(\frac{\delta}{\|v_2\|}\right)\right)\\[1ex]
&\quad-\frac{1}{\pi}\Arg\Bigg(1-\frac{\alpha}{\|v_1\|\|v_2\|}\zeta\left(\frac{\gamma}{\|v_1\|}\right)\zeta\left(\frac{\delta}{\|v_2\|}\right)\Bigg)
-\frac{1}{\pi}\Arg\left(1-\frac{\bar{\alpha}}{\|v_1\|\|v_2\|}\overline{\zeta\left(\frac{\gamma}{\|v_1\|}\right)}\zeta\left(\frac{\delta}{\|v_2\|}\right)\right).
\end{align*}
Setting $\frac{\alpha}{\|v_1\|\|v_2\|}=r e^{i\phi}$ $(0<r<1)$, $\zeta\left(\frac{\gamma}{\|v_1\|}\right)=e^{i\theta_1}$, and $\zeta\left(\frac{\delta}{\|v_2\|}\right)=e^{i\theta_2}$, we get 
\begin{align}\label{equ:G}
g(\delta,&\gamma)=\frac{1}{\pi}
\begin{aligned}[t]
\bigg(\Arg(1-re^{i(-\phi+\theta_1+\theta_2)})&+\Arg(1-re^{i(\phi-\theta_1+\theta_2)}) \\
&-\Arg(1-re^{i(\phi+\theta_1+\theta_2)})-\Arg(1-re^{i(-\phi-\theta_1+\theta_2)})\bigg)
\end{aligned}
\notag\\[2ex]
&=\frac{1}{\pi}
\begin{aligned}[t]
\bigg(\arctan&\left(\frac{-r\sin(-\phi+(\theta_1+\theta_2))}{1-r\cos(-\phi+(\theta_1+\theta_2)}\right)
+\arctan\left(\frac{-r\sin(\phi-(\theta_1-\theta_2))}{1-r\cos(\phi-(\theta_1-\theta_2))}\right)\\
&-\arctan\left(\frac{-r\sin(\phi+(\theta_1+\theta_2))}{1-r\cos(\phi+(\theta_1+\theta_2))}\right)-
\arctan\left(\frac{-r\sin(-\phi-(\theta_1-\theta_2))}{1-r\-cos(-\phi-(\theta_1-\theta_2))}\right)\bigg)
\end{aligned}
\notag\\[2ex]
&=\frac{1}{\pi}
\begin{aligned}[t]
\bigg(\arctan&\left(\frac{r\sin(\phi-(\theta_1+\theta_2))}{1-r\cos(\phi-(\theta_1+\theta_2))}\right)
+\arctan\left(\frac{r\sin(\phi+(\theta_1+\theta_2))}{1-r\cos(\phi+(\theta_1+\theta_2))}\right)\\
&-\arctan\left(\frac{r\sin(\phi-(\theta_1-\theta_2))}{1-r\cos(\phi-(\theta_1-\theta_2))}\right)
-\arctan\left(\frac{r\sin(\phi+(\theta_1-\theta_2))}{1-r\cos(\phi+(\theta_1-\theta_2))}\right)\bigg).
\end{aligned}
\end{align}

\section{On the essential spectrum}

As an application, we determine the essential spectrum of the operator $T$ whose principal function we found in Section~\ref{sec:princfunc}.
We will use the following, which follows from Theorem 8.1 of \cite{CP2}:
\begin{thm}[\cite{CP2}]
\label{thm:GIND}
Suppose $T$ is an operator on a Hilbert space $\HEu$ with self-commutator $T^*T-TT^*$ in trace class. For $\gamma+i\delta$ not in the essential spectrum of $T$, 
\[
g(\delta,\gamma)=\ind(T-(\gamma+i\delta)),
\] 
where $g(\delta,\gamma)$ is the principal function for $T$.
\end{thm}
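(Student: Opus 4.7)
The plan is to recover $g$ pointwise from the defining identity (\ref{equ:PF}) by a double Stieltjes inversion and then match the resulting jump with the Fredholm index of $T-\lambda$. Fix $\lambda_0=\gamma_0+i\delta_0\notin\sigma_e(T)$ and choose a small open disk $D\subset\Cpx\setminus\sigma_e(T)$ about $\lambda_0$. Since $\sigma_e(T)$ is closed and the Fredholm index is locally constant on its complement, the integer $n:=\ind(T-\lambda_0)$ agrees with $\ind(T-\lambda)$ for all $\lambda\in D$, and the goal is to show $g(\delta,\gamma)=n$ for every $(\delta,\gamma)$ with $\gamma+i\delta\in D$.

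Applying the Stieltjes inversion formula in $l$ to the logarithm of (\ref{equ:PF}) gives a boundary-value expression for $\gamma\mapsto\int_\R g(\delta,\gamma)(\gamma-s)^{-1}\,d\gamma$ in terms of the jump of $\log\det E(l,s)$ across $\R$ in the variable $l$; a second inversion in $s$ then yields $g(\delta,\gamma)$ pointwise as $\tfrac{1}{(2\pi i)^2}$ times the double discontinuity of $\log\det E(l,s)$ across both real axes. It therefore suffices to identify this double jump at $(\delta_0,\gamma_0)$ with $(2\pi i)^2 n$.

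The analytic bridge is a Gohberg--Krein / Helton--Howe type identity that exhibits $\det E(l,s)$, for $\lambda=\gamma+i\delta$ in $D$, as a Fredholm determinant $\det(I+K(\lambda))$ up to a nowhere-zero holomorphic factor, where $K(\lambda)$ is trace class and depends meromorphically on $\lambda\in D$. Under such an identification the winding of $\det E$ around $\lambda_0$ is computed by the index of $I+K(\lambda)$, which in turn equals $\ind(T-\lambda_0)=n$ via a parametrix construction for $T-\lambda$ and the Gohberg--Krein logarithmic residue formula. Reading the boundary jump off from this winding number then delivers the integer $n$ at $(\delta_0,\gamma_0)$, completing the proof.

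The hard part will be establishing that $\det E(l,s)$ really is a Fredholm determinant sensitive to $\ind(T-\lambda)$: as defined in (\ref{equ:DF}) it depends on the auxiliary self-adjoints $U=\tfrac{1}{2}(T+T^*)$ and $V=-\tfrac{i}{2}(T-T^*)$ through resolvents at $l\notin\sigma(V)$ and $s\notin\sigma(U)$, and its link to the index of $T-\lambda$ is not manifest in the variables $(l,s)$. The Carey--Pincus argument proceeds via the spectral shift function for the pair $(U,V)$ combined with rank-one perturbation analysis, and the delicate point is to interchange the iterated boundary limits of the Stieltjes inversions with trace-class determinant operations, so as to transport an algebraic identity valid off the spectra of $U$ and $V$ all the way down to the real axes inside $D$.
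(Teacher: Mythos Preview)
The paper does not prove this theorem at all: it is quoted verbatim as a consequence of Theorem~8.1 of \cite{CP2} and used as a black box in the proof of Theorem~\ref{thm:ESST}. So there is no ``paper's own proof'' to compare against; the authors simply cite Carey--Pincus.

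Your proposal is therefore not competing with anything in the paper, but it is also not a proof. What you have written is an outline of the shape such a proof might take---double Stieltjes inversion, identification of $\det E(l,s)$ as a Fredholm determinant, a logarithmic-residue/winding argument---followed by an explicit acknowledgment that ``the hard part will be establishing that $\det E(l,s)$ really is a Fredholm determinant sensitive to $\ind(T-\lambda)$'' and that ``the delicate point is to interchange the iterated boundary limits\ldots''. None of these steps are carried out. In particular, the crucial link between the $(l,s)$-variables (resolvent parameters for $U$ and $V$ separately) and the single complex variable $\lambda=\gamma+i\delta$ is asserted but never constructed; you say $\det E$ becomes $\det(I+K(\lambda))$ ``up to a nowhere-zero holomorphic factor'' with $K$ meromorphic in $\lambda$, but this is exactly the substantive content of the Carey--Pincus machinery, and without it the subsequent winding-number and index identifications have nothing to stand on.

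If you want to include a proof here rather than a citation, you would need to actually reproduce the Carey--Pincus argument (or an alternative such as the Helton--Howe trace formula approach), not just name its ingredients. As written, the proposal is a plan, not a proof.
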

\begin{lemma}\label{lem:RangeofG}
Let $0<r<1$ and let
\begin{multline*}
h(r,\phi,\theta_1,\theta_2) \\[1ex]
=\frac1\pi\bigg(
\arctan\left(\frac{r\sin(\phi-(\theta_1+\theta_2))}{1-r\cos(\phi-(\theta_1+\theta_2))}\right)
+\arctan\left(\frac{r\sin(\phi+(\theta_1+\theta_2))}{1-r\cos(\phi+(\theta_1+\theta_2))}\right) \\[2ex]
-\arctan\left(\frac{r\sin(\phi-(\theta_1-\theta_2))}{1-r\cos(\phi-(\theta_1-\theta_2))}\right)
-\arctan\left(\frac{r\sin(\phi+(\theta_1-\theta_2))}{1-r\cos(\phi+(\theta_1-\theta_2))}\right)
\bigg).
\end{multline*}
\begin{enumerate}[(a)]
\item\label{it:0pi} If $\phi=0$ or $\phi=\pi$, then $h(r,\phi,\theta_1,\theta_2)=0$.
\item\label{it:sinphipos}
If $0<\phi<\pi$, then for all $\theta_1,\theta_2\in[-\pi,0]$, we have
\[
-\frac2\pi\arctan\left(\frac{2r\sin\phi}{1-r^2}\right)\le h(r,\phi,\theta_1,\theta_2)\le 0,
\]
with equality holding on the left when $\theta_1=\theta_2=-\frac\pi2$ and equality holding on the right
only when $\theta_1\in\{-\pi,0\}$ or $\theta_2\in\{-\pi,0\}$.
\item\label{it:sinphineg}
If $\pi<\phi<2\pi$, then for all $\theta_1,\theta_2\in[-\pi,0]$, we have
\[
0\le h(r,\phi,\theta_1,\theta_2)\le-\frac2\pi\arctan\left(\frac{2r\sin\phi}{1-r^2}\right),
\]
with equality holding on the right when $\theta_1=\theta_2=-\frac\pi2$ and equality holding on the left
only when $\theta_1\in\{-\pi,0\}$ or $\theta_2\in\{-\pi,0\}$.
\end{enumerate}
\end{lemma}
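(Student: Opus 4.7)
The plan is to rewrite $h$ as the argument of a single complex ratio, and then exploit the geometry of a line segment in the plane. Using the identity $\arctan\bigl(\frac{r\sin\alpha}{1-r\cos\alpha}\bigr)=-\Arg(1-re^{i\alpha})$, which is valid since $1-r\cos\alpha>0$ for $0<r<1$, each of the four arctangents in $h$ becomes $-\Arg(1-re^{i\beta})$ for an appropriate $\beta$. Combining the four terms and expanding the products via $(1-rae^{i\beta})(1-rae^{-i\beta})=1-2ra\cos\beta+r^2a^2$ with $a=e^{i\phi}$ yields
\[
\pi h = \Arg Q,\qquad Q=\frac{F(\cos(\theta_1-\theta_2))}{F(\cos(\theta_1+\theta_2))},\qquad F(x)=1-2re^{i\phi}x+r^2e^{2i\phi}.
\]
Strictly this identifies $\pi h$ with $\Arg Q$ only modulo $2\pi$; equality on the nose is then enforced by continuity from $\phi=0$, where both sides vanish. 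Part (a) reduces to pairwise cancellation of the four arctangents via the symmetries $\psi(-\alpha)=-\psi(\alpha)$ at $\phi=0$ and $\psi(\pi+\alpha)+\psi(\pi-\alpha)=0$ at $\phi=\pi$, where $\psi(\alpha):=\arctan\bigl(\frac{r\sin\alpha}{1-r\cos\alpha}\bigr)$.

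For part (b), the critical observation is that $F$ is affine in $x$, so $F([-1,1])$ is a line segment in $\Cpx$ with endpoints $F(\mp 1)=(1\pm re^{i\phi})^2$, and this segment misses the origin: $F(x)=0$ forces $x=\tfrac12(re^{i\phi}+r^{-1}e^{-i\phi})$, which is non-real when $0<\phi<\pi$ and $0<r<1$. Hence $\Arg F$ is a well-defined smooth function on $[-1,1]$. The main step, and the principal obstacle, is showing that $x\mapsto\Arg F(x)$ is \emph{strictly decreasing}. Computing
\[
\frac{d}{dx}\Arg F(x)=\Im\frac{F'(x)}{F(x)}=\frac{\Im\bigl(F'(x)\overline{F(x)}\bigr)}{|F(x)|^2}
\]
with $F'(x)=-2re^{i\phi}$, the numerator simplifies (using $\sin(2\phi)=2\sin\phi\cos\phi$ and $\cos(2\phi)=2\cos^2\phi-1$) to $-2r(1-r^2)\sin\phi$, which is strictly negative on $(0,\pi)$. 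This is the only delicate calculation.

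Given this monotonicity, $\pi h=\Arg F(x_1)-\Arg F(x_2)$ has sign opposite to $x_1-x_2$, where $x_1-x_2=\cos(\theta_1-\theta_2)-\cos(\theta_1+\theta_2)=2\sin\theta_1\sin\theta_2$. On $[-\pi,0]$ both sines are non-positive, so $x_1-x_2\ge 0$ and thus $h\le 0$, with equality iff $\sin\theta_1\sin\theta_2=0$, i.e., $\theta_1\in\{-\pi,0\}$ or $\theta_2\in\{-\pi,0\}$. The extremum $h=-\tfrac{2}{\pi}\arctan\bigl(\tfrac{2r\sin\phi}{1-r^2}\bigr)$ is attained by maximizing $x_1-x_2$, namely $x_1=1$ and $x_2=-1$, which in the admissible range forces $\theta_1=\theta_2=-\pi/2$; evaluating there, $F(1)/F(-1)=(1-re^{i\phi})^2/(1+re^{i\phi})^2$ and the standard computation $\Arg\bigl(\tfrac{1-re^{i\phi}}{1+re^{i\phi}}\bigr)=-\arctan\bigl(\tfrac{2r\sin\phi}{1-r^2}\bigr)$ gives the stated value. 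Part (c) follows from (b) via the symmetry $h(r,2\pi-\phi,\theta_1,\theta_2)=-h(r,\phi,\theta_1,\theta_2)$, which is visible term-by-term from $\psi(2\pi-\phi\pm\beta)=-\psi(\phi\mp\beta)$.
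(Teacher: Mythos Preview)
Your proof is correct and takes a genuinely different route from the paper's. The paper argues by calculus: after substituting $\nu=\theta_1+\theta_2$, $\mu=\theta_1-\theta_2$, it computes $\partial\htil/\partial\nu$ and $\partial\htil/\partial\mu$, shows (via monotonicity of $c\mapsto r(c-r)/(1-2rc+r^2)$) that the only interior critical point is $(\nu,\mu)=(-\pi,0)$, checks that $\htil$ vanishes on the boundary of the $(\nu,\mu)$-region, and then evaluates at the critical point using the arctan addition formula. Your approach bypasses the critical-point search entirely: the factorization $(1-re^{i(\phi-\beta)})(1-re^{i(\phi+\beta)})=F(\cos\beta)$ collapses the four arctangents into $\pi h=\Arg F(\cos(\theta_1-\theta_2))-\Arg F(\cos(\theta_1+\theta_2))$, and a single derivative computation shows $\Arg F$ is strictly monotone on $[-1,1]$; the sign of $h$ and its extremes then fall out of the elementary inequality $\cos(\theta_1-\theta_2)\ge\cos(\theta_1+\theta_2)$ on the square.

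Two minor remarks. First, your continuity-from-$\phi=0$ justification for $\pi h=\Arg Q$ is not needed: since each factor $1-re^{i\alpha}$ has real part $1-r\cos\alpha>0$, its argument lies in $(-\pi/2,\pi/2)$, so the identity $\Arg(z_1z_2)=\Arg z_1+\Arg z_2$ holds on the nose for each pair, giving $\pi h=\Arg F(x_1)-\Arg F(x_2)$ exactly (and showing $F([-1,1])$ avoids the closed negative real axis). Second, the phrase ``maximizing $x_1-x_2$'' is slightly loose; what you actually use (and what suffices) is that $\Arg F(x_1)\ge\Arg F(1)$ and $\Arg F(x_2)\le\Arg F(-1)$ separately, with the extremal pair $(x_1,x_2)=(1,-1)$ realized at $\theta_1=\theta_2=-\pi/2$. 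Your argument is cleaner than the paper's and yields the slightly stronger conclusion that $\theta_1=\theta_2=-\pi/2$ is the \emph{unique} point where the lower bound is attained.
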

\begin{proof}
Part~\eqref{it:0pi} is clear and
we may assume $\phi\in(0,\pi)\cup(\pi,2\pi)$.

Let $\nu=\theta_1+\theta_2$ and $\mu=\theta_1-\theta_2$.
Then we are interested in the function
\begin{multline*}
\htil(r,\phi,\nu,\mu)=\frac1\pi\bigg(
\arctan\left(\frac{r\sin(\phi-\nu)}{1-r\cos(\phi-\nu)}\right)
+\arctan\left(\frac{r\sin(\phi+\nu)}{1-r\cos(\phi+\nu)}\right)\\[2ex]
-\arctan\left(\frac{r\sin(\phi-\mu)}{1-r\cos(\phi-\mu)}\right)
-\arctan\left(\frac{r\sin(\phi+\mu)}{1-r\cos(\phi+\mu)}\right) 
\bigg),
\end{multline*}
where
\begin{gather}
-2\pi\le\nu\le0 \label{eq:nurange} \\
-\min(-\nu,2\pi+\nu)\le\mu\le\min(-\nu,2\pi+\nu). \label{eq:murange}
\end{gather}
In particular, we always have $|\mu|\le\pi$.
Note that the boundaries of the region described by \eqref{eq:nurange}--\eqref{eq:murange}
correspond to $\theta_1\in\{-\pi,0\}$ or $\theta_2\in\{-\pi,0\}$, where the function $h$ vanishes.

An extreme point of $\htil$ not on the boundary can occur only where
\[
\frac{\partial\htil}{d\nu}=\frac{\partial\htil}{d\mu}=0.
\]
We compute
\[
\frac d{dx}\arctan\left(\frac{r\sin(x)}{1-r\cos(x)}\right)=\frac{r(\cos(x)-r)}{1-2r\cos(x)+r^2}.
\]
We also compute
\[
\frac d{dc}\left(\frac{c-r}{1-2rc+r^2}\right)=\frac{1-r^2}{(1-2rc+r^2)^2}>0,
\]
so the function
\[
c\mapsto \frac{r(c-r)}{1-2rc+r^2}
\]
is strictly increasing on $[-1,1]$.
Therefore,
\begin{multline*}
\frac{\partial\htil}{d\nu}=\frac d{d\nu}\left(\arctan\left(\frac{r\sin(\phi-\nu)}{1-r\cos(\phi-\nu)}\right)
+\arctan\left(\frac{r\sin(\phi+\nu)}{1-r\cos(\phi+\nu)}\right)\right) \\[1ex]
=\frac{-r(\cos(\phi-\nu)-r)}{1-2r\cos(\phi-\nu)+r^2}+\frac{r(\cos(\phi+\nu)-r)}{1-2r\cos(\phi+\nu)+r^2}
\end{multline*}
vanishes if and only if $\cos(\phi-\nu)=\cos(\phi+\nu)$, which in turn occurs if and only if either $\nu\in\pi\Ints$ or $\phi\in\pi\Ints$. We assumed $\phi\notin\pi\Ints$. If $\nu\in\{-2\pi,0\}$, then $\nu$ is on the boundary of the interval~\eqref{eq:nurange}, so the only possibility that is not on the boundary
of the region is $\nu=-\pi$.

Arguing as above, $\frac{\partial\htil}{d\mu}=0$ if and only if $\cos(\phi-\mu)=\cos(\phi+\mu)$. Avoiding the boundary, this leaves only $\mu=0$. We conclude that the only extreme point of $\htil$ not on the boundary occurs at $(\nu,\mu)=(-\pi,0)$,
i.e., at $(\theta_1,\theta_2)=(-\frac\pi2,-\frac\pi2)$, and the value of $\htil$ there is
\begin{equation}\label{eq:arctansum}
-\frac2\pi\left(
\arctan\left(\frac{r\sin\phi}{1-r\cos\phi}\right)
+\arctan\left(\frac{r\sin\phi}{1+r\cos\phi}\right)\right).
\end{equation}
We have the identity, for $\alpha,\beta\in\R$,
\[
\arctan(\alpha)+\arctan(\beta)\in\arctan\left(\frac{\alpha+\beta}{1-\alpha\beta}\right)+\pi\Ints.
\]
Letting
\[
\alpha=\frac{r\sin\phi}{1-r\cos\phi}\quad\text{and}\quad\beta=\frac{r\sin\phi}{1+r\cos\phi},
\]
since
\[
0< \alpha\beta=\frac{r^2\sin^2\phi}{1-r^2+r^2\sin^2\phi}<1,
\]
we find that the quantity~\eqref{eq:arctansum} equals
\begin{equation}\label{eq:extval}
-\frac2\pi\arctan\left(\frac{\alpha+\beta}{1-\alpha\beta}\right)
=-\frac2\pi\arctan\left(\frac{2r\sin\phi}{1-r^2}\right).
\end{equation}

We already observed that on the boundaries of the region described by \eqref{eq:nurange}--\eqref{eq:murange},
the function $\htil$ vanishes and
we just showed that the only extreme value not on the boundary is~\eqref{eq:extval},
which is attained when $\theta_1=\theta_2=-\frac\pi2$.
In particular, $\htil$ is never vanishing on the interior of the region.
This completes the proof of~\eqref{it:sinphipos} and~\eqref{it:sinphineg}.
\end{proof}

\begin{thm}\label{thm:ESST}
Let $T=l(v_1)+l(v_1)^*+i(r(v_2)+r(v_2)^*)$ with $v_1$ and $v_2$ linearly independent and $\Im\langle v_1,v_2\rangle\not=0$. Then the essential spectrum $\sigma_{e}(T)$ of $T$ is the closed rectangle 
\begin{equation}\label{eq:rect}
\{\gamma+i\delta\in\Cpx\ |\  |\gamma|\le2\|v_1\|\mbox{ and }|\delta|\le2\|v_2\|\},
\end{equation}
which equals the spectrum $\sigma(T)$ of $T$.
\end{thm}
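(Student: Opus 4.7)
The plan is to establish the chain of inclusions $R\subseteq\sigma_e(T)\subseteq\sigma(T)\subseteq R$, where $R$ denotes the closed rectangle in~\eqref{eq:rect}. The right-hand containment $\sigma(T)\subseteq R$ is immediate from Lemma~\ref{lem:SPEC} applied to the self-adjoint operators $X_1$ and $X_2$ (whose spectra lie in $[-2\|v_1\|,2\|v_1\|]$ and $[-2\|v_2\|,2\|v_2\|]$, respectively), while $\sigma_e(T)\subseteq\sigma(T)$ is automatic. So the only real task is to show that every point of $R$ belongs to $\sigma_e(T)$.

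For this I will use Theorem~\ref{thm:GIND}: if $\gamma+i\delta\notin\sigma_e(T)$, then $g(\delta,\gamma)$ must equal the Fredholm index of $T-(\gamma+i\delta)$ and hence must be an integer. It therefore suffices to check, using the explicit formula~\eqref{equ:G}, that $g(\delta,\gamma)$ is non-integer whenever $(\gamma,\delta)$ lies in the open interior of~$R$; since $\sigma_e(T)$ is closed and this interior is dense in $R$, the inclusion $R\subseteq\sigma_e(T)$ will follow.

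To make the non-integrality check, I will apply Lemma~\ref{lem:RangeofG} after the identifications $\alpha=\langle v_1,v_2\rangle$, $\alpha/(\|v_1\|\|v_2\|)=re^{i\phi}$, $\zeta(\gamma/\|v_1\|)=e^{i\theta_1}$, and $\zeta(\delta/\|v_2\|)=e^{i\theta_2}$. Linear independence of $v_1,v_2$ together with Cauchy--Schwarz gives $r\in(0,1)$; the hypothesis $\Im\langle v_1,v_2\rangle\ne0$ forces $\phi\in(0,\pi)\cup(\pi,2\pi)$; and since $\zeta$ maps $(-2,2)$ bijectively onto the open lower half of the unit circle, a point $(\gamma,\delta)$ in the interior of $R$ corresponds to $\theta_1,\theta_2\in(-\pi,0)$. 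Parts~(b) and~(c) of Lemma~\ref{lem:RangeofG} then guarantee both that $g(\delta,\gamma)=h(r,\phi,\theta_1,\theta_2)$ is nonzero and that $|g(\delta,\gamma)|\le\tfrac{2}{\pi}\arctan\bigl(\tfrac{2r|\sin\phi|}{1-r^2}\bigr)<1$, so $g(\delta,\gamma)\in(-1,0)\cup(0,1)$ and is in particular never an integer. The main substance of the proof is thus already packaged inside Lemma~\ref{lem:RangeofG}---specifically the strictness clauses that keep $g$ away from $0$ on the interior---and the remaining steps are purely formal.
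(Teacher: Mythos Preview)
Your proof is correct and follows essentially the same route as the paper's: both use Lemma~\ref{lem:SPEC} for the containment $\sigma(T)\subseteq R$, then invoke Lemma~\ref{lem:RangeofG} together with Theorem~\ref{thm:GIND} to force the open interior of $R$ into $\sigma_e(T)$, and close up to get the full rectangle. Your presentation is slightly more explicit about the chain of inclusions and about why the extreme value $\tfrac{2}{\pi}\arctan\bigl(\tfrac{2r|\sin\phi|}{1-r^2}\bigr)$ is strictly less than~$1$, but the substance is identical.
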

\begin{proof}
By Lemma \ref{lem:SPEC}, we have that $\sigma(T)$ is contained in the rectangle~\eqref{eq:rect}. For $\gamma\in\sigma(X_1)$ and $\delta\in\sigma(X_2)$, we have the formula of the principal function $g(\delta,\gamma)$ in 
(\ref{equ:G}). By Lemma \ref{lem:RangeofG}, $-1<g(\delta,\gamma)\le0$ if $\Im\langle v_1,v_2\rangle>0$, and $0\le g(\delta,\gamma)<1$ if $\Im\langle v_1,v_2\rangle<0$. The equality $g(\delta,\gamma)=0$ holds only when $\gamma\in\{2\|v_1\|,-2\|v_1\|\}$ or $\delta\in\{2\|v_2\|,-2\|v_2\|\}$, i.e., when $\gamma$ and $\delta$ are on the boundary of the rectangle~\eqref{eq:rect}. So the function $g(\delta,\gamma)$ does not assume any integer value on the interior of the rectangle. But, by Theorem \ref{thm:GIND}, if $\gamma+i\delta\notin\sigma_{e}(T)$, then $g(\delta, \gamma)=\ind(T-(\gamma+i\delta))$. So the whole interior of the rectangle is included in the essential spectrum of $T$. Since $\sigma_{e}(T)$ is closed in $\Cpx$ and is contained in $\sigma(T)$, we have $\sigma_{e}(T)$ equals the rectangle~\eqref{eq:rect}.
\end{proof}

\begin{prop}[\cite{BDF}]\label{prop:NK}
Suppose that $T$ has compact self-commutator $T^*T-TT^*$ on a Hilbert space $\HEu$ and $\ind(T-\lambda)=0$ for all $\lambda\in\Cpx\setminus\sigma_{e}(T)$. Then $T$ is of the form $N+K$ where $N$ is normal and $K$ is compact.
\end{prop}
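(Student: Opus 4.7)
The plan is to invoke the Brown--Douglas--Fillmore (BDF) classification theorem, from which the proposition is drawn. The hypothesis that $T^*T-TT^*$ is compact says that $T$ is \emph{essentially normal}: its image $\pi(T)$ under the quotient map $\pi\colon B(\HEu)\to B(\HEu)/\mathcal{K}(\HEu)$ onto the Calkin algebra is normal. Consequently the unital $C^*$-subalgebra of the Calkin algebra generated by $\pi(T)$ is isomorphic to $C(\sigma_{e}(T))$, and pulling back along $\pi$ produces a short exact sequence
\[
0\longrightarrow \mathcal{K}(\HEu)\longrightarrow \mathcal{E}_{T} \longrightarrow C(\sigma_{e}(T))\longrightarrow 0
\]
representing a class $[T]$ in the group $\mathrm{Ext}(\sigma_{e}(T))$.

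The core content of BDF is that for any compact set $X\subseteq\Cpx$, the Fredholm-index map assigning to each class $[T']\in\mathrm{Ext}(X)$ the function $\lambda\mapsto\ind(T'-\lambda)$ on $\Cpx\setminus X$ is a group isomorphism onto the $\Ints$-valued continuous functions on $\Cpx\setminus X$ that vanish on the unbounded component. By hypothesis the index function attached to our $T$ vanishes identically on $\Cpx\setminus\sigma_{e}(T)$, so $[T]=0$ in $\mathrm{Ext}(\sigma_{e}(T))$.

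The vanishing of $[T]$ means, by the definition of $\mathrm{Ext}$, that the extension above splits, which is equivalent to the existence of a unital $*$-homomorphism $\sigma\colon C(\sigma_{e}(T))\to B(\HEu)$ with $\pi\circ\sigma=\mathrm{id}$. Applying $\sigma$ to the coordinate function yields a normal operator $N\in B(\HEu)$ with $\pi(N)=\pi(T)$; setting $K:=T-N$ we obtain $T=N+K$ with $K$ compact, as required.

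The main obstacle, if one wished to reprove the result from first principles, is the BDF classification itself, whose proof combines Voiculescu's noncommutative Weyl--von Neumann theorem with a nontrivial homological identification of $\mathrm{Ext}(X)$ (via the Universal Coefficient Theorem or direct topological computation for planar compacta). Since the proposition is attributed to \cite{BDF}, we are entitled to take this identification as a black box and simply cite it.
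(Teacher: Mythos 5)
Your proposal is correct: the paper gives no proof of this proposition at all, citing \cite{BDF} as the source, and your argument is the standard derivation from the BDF classification --- essential normality gives an extension of $C(\sigma_{e}(T))$ by the compacts, the vanishing of the index function forces the class to be trivial in $\mathrm{Ext}(\sigma_{e}(T))$, and triviality yields a splitting whose image of the coordinate function is the desired normal $N$ with $T-N$ compact. Since both you and the paper ultimately rest the statement on the Brown--Douglas--Fillmore theorem, this is essentially the same approach, with your write-up simply making the route through $\mathrm{Ext}$ explicit.
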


\begin{cor}
The operator $T=l(v_1)+l(v_1)^*+i(r(v_2)+r(v_2)^*)$ with linearly independent $v_1$ and $v_2$ and $\Im\langle v_1,v_2\rangle\not=0$ is normal plus compact.
\end{cor}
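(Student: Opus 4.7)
The plan is to verify the two hypotheses of Proposition~\ref{prop:NK} and then invoke it directly.

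First, I would check that the self-commutator of $T$ is compact. This is immediate from equation~(\ref{equ:SC}), which gives $[T,T^*]=4(\Im\langle v_2,v_1\rangle)P$. Since $P$ is the rank-one projection onto $\Cpx\Omega$ and $\Im\langle v_2,v_1\rangle$ is a scalar, the self-commutator has rank at most one, hence is certainly compact (in fact trace class). Note that strictly speaking we are working on the pure part $\overline{\alg(T,T^*,1)\Omega}$, but $\Omega$ lies in this subspace, so the compressed self-commutator still has rank at most one.

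Next, I would verify the index condition. By Theorem~\ref{thm:ESST}, we have $\sigma_e(T)=\sigma(T)$, both equal to the closed rectangle in~(\ref{eq:rect}). Therefore, for any $\lambda\in\Cpx\setminus\sigma_e(T)=\Cpx\setminus\sigma(T)$, the operator $T-\lambda$ is invertible, and in particular Fredholm of index zero. (Equivalently, one could observe that the principal function $g$ vanishes outside the rectangle, and then apply Theorem~\ref{thm:GIND}.)

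With both hypotheses established, Proposition~\ref{prop:NK} yields $T=N+K$ with $N$ normal and $K$ compact. There is no real obstacle here; the corollary is essentially a packaging of Theorem~\ref{thm:ESST} together with the known BDF-type result of Proposition~\ref{prop:NK}, so the proof is very short.
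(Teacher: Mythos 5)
Your proposal is correct and matches the paper's intended argument: the corollary is stated there as an immediate consequence of Theorem~\ref{thm:ESST} (which gives $\sigma(T)=\sigma_e(T)$, hence index zero off the essential spectrum) together with the rank-one self-commutator from~(\ref{equ:SC}) and Proposition~\ref{prop:NK}. Nothing further is needed.
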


\begin{example}
Let $v_2$ and $u$ be orthogonal vectors in a Hilbert space $\HEu$ with $\|v_2\|=\|u\|=1$, and let $\alpha=\frac{1}{\sqrt{2}}-\frac{1}{\sqrt{2}}i \in\Cpx$. Set $v_1$ in $\HEu$ by $v_1=\alpha v_2+u$. Suppose that $T$ is a bounded operator on the full Fock space $\FEu(\HEu)$ defined by $T=l(v_1)+l(v_1)^*+i(r(v_2)+r(v_2)^*)$. Then, $[T,T^*]=2\sqrt{2}P$ and it is an one-dimensional projection on $\FEu(\HEu)$. So, by restricting $T^*$ to its pure part $\overline{\alg(T,T^*,1)\Omega}$, $T^*$ is a completely non-normal hyponormal operator. 

We can find the principal function $g(\delta,\gamma)$ of $T$ by the formula (\ref{equ:G}). For each pair $(\delta, \gamma)$ such that $|\delta|\le2$ and $|\gamma|\le2\sqrt{2}$, we have 
\begin{multline*}
g(\delta,\gamma)=
\frac{1}{\pi}\Arg\Bigg(1-\left(\frac{1}{2}+\frac{1}{2}i\right)\zeta\left(\frac{\gamma}{\sqrt{2}}\right)\zeta(\delta)\Bigg)
+\frac{1}{\pi}\Arg\left(1-\left(\frac{1}{2}-\frac{1}{2}i\right)\overline{\zeta\left(\frac{\gamma}{\sqrt{2}}\right)}\zeta(\delta)\right)\\[1ex]
-\frac{1}{\pi}\Arg\Bigg(1-\left(\frac{1}{2}-\frac{1}{2}i\right)\zeta\left(\frac{\gamma}{\sqrt{2}}\right)\zeta(\delta)\Bigg)
-\frac{1}{\pi}\Arg\left(1-\left(\frac{1}{2}+\frac{1}{2}i\right)\overline{\zeta\left(\frac{\gamma}{\sqrt{2}}\right)}\zeta(\delta))\right),
\end{multline*}
where $\zeta(t)=\frac{t-i\sqrt{4-t^2}}{2}$ for $t\in[-2,2]$. Since $\Im\langle v_1,v_2\rangle<0$, we have $0\le g(\delta,\gamma)<1$ for all $(\delta,\gamma)\in\R^2$. By Lemma \ref{lem:RangeofG}, $g(\delta,\gamma)$ is vanishing only when $(\delta,\gamma)$ is on the boundary of the rectangle $\{(\delta,\gamma)\in\R^2\ |\  |\gamma|\le2\sqrt{2}$ and $|\delta|\le2\}$. Therefore, $\sigma(T)=\sigma_{e}(T)=\{\gamma+i\delta\in\Cpx\ |\  |\gamma|\le2\sqrt{2}$ and $|\delta|\le2\}$. See Figure \ref{fig:EXAMPLE}.
\end{example}

%\graphicspath{/Users/WONHEE/Google Drive/GFex}

\begin{figure}[hb]

\centering

\includegraphics[width=3.3in]{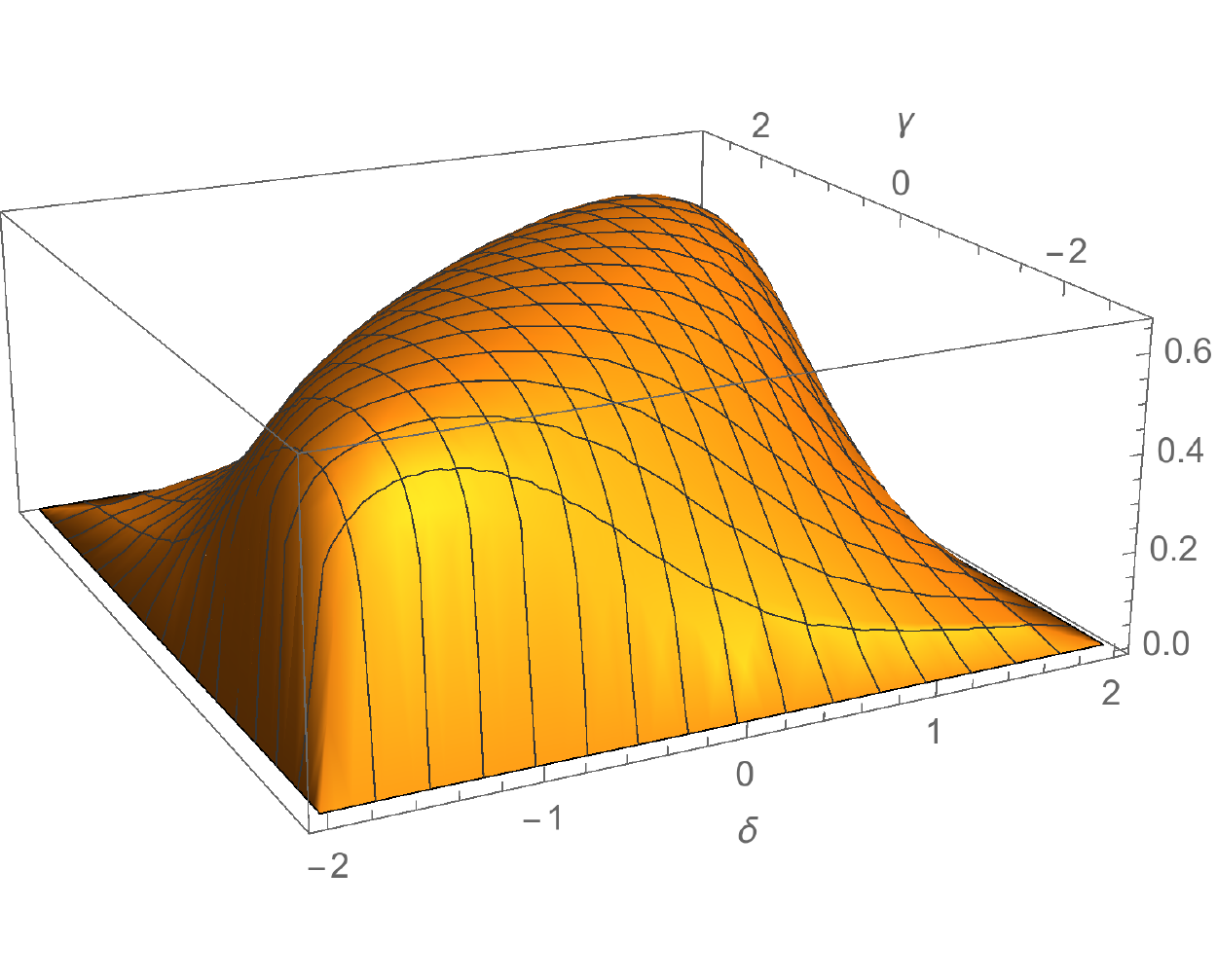}

\caption{The principal function $g(\delta,\gamma)$ of T where $v_1=\alpha v_2+u$, $\alpha=\frac{1}{\sqrt{2}}-\frac{1}{\sqrt{2}}i$, $u\perp v_1$ and $\|v_2\|=\|u\|=1$.}

\label{fig:EXAMPLE}

\end{figure}

\begin{bibdiv}
\begin{biblist}
\bib{BDF}{article}{
   author={Brown, L. G.},
   author={Douglas, R. G.},
   author={Fillmore, P. A.},
   title={Unitary equivalence modulo the compact operators and extensions of
   $C^*$-algebras},
   conference={
      title={Proceedings of a Conference on Operator Theory},
      address={Dalhousie Univ., Halifax, N.S.},
      date={1973},
   },
   book={
      publisher={Springer, Berlin},
   },
   date={1973},
   pages={58--128. Lecture Notes in Math., Vol. 345},
   %review={\MR{0380478 (52 \#1378)}},
}

\bib{CM}{article}{
   author={Conway, John B.},
   author={McGuire, Paul},
   title={Operators with $C^*$-algebra generated by a unilateral
   shift},
   journal={Trans. Amer. Math. Soc.},
   volume={284},
   date={1984},
   number={1},
   pages={153--161},
   % issn={0002-9947},
   %review={\MR{742417 (86b:47041)}},
   %doi={10.2307/1999279},
}

\bib{CP1}{article}{
   author={Carey, Richard W.},
   author={Pincus, Joel D.},
   title={An invariant for certain operator algebras},
   journal={Proc. Nat. Acad. Sci. U.S.A.},
   volume={71},
   date={1974},
   pages={1952--1956},
   %issn={0027-8424},
   %review={\MR{0344925 (49 \#9664)}},
}

\bib{CP2}{article}{
   author={Carey, Richard W.},
   author={Pincus, Joel D.},
   title={Mosaics, principal functions, and mean motion in von Neumann
   algebras},
   journal={Acta Math.},
   volume={138},
   date={1977},
   number={3-4},
   pages={153--218},
   %issn={0001-5962},
   %review={\MR{0435901 (55 \#8852)}},
}

\bib{CP3}{article}{
   author={Carey, Richard W.},
   author={Pincus, Joel D.},
   title={Construction of seminormal operators with prescribed mosaic},
   journal={Indiana Univ. Math. J.},
   volume={23},
   date={1973/74},
   pages={1155--1165},
   %issn={0022-2518},
   %review={\MR{0338829 (49 \#3593)}},
}

\bib{MP}{book}{
   author={Martin, Mircea},
   author={Putinar, Mihai},
   title={Lectures on hyponormal operators},
   series={Operator Theory: Advances and Applications},
   volume={39},
   publisher={Birkh\"auser Verlag, Basel},
   date={1989},
   pages={304},
   %isbn={3-7643-2329-9},
   %review={\MR{1028066 (91c:47041)}},
   %doi={10.1007/978-3-0348-7466-3},
}

\bib{Voi1}{article}{
   author={Voiculescu, Dan},
   title={Free probability for pairs of faces I},
   journal={Comm. Math. Phys.},
   volume={332},
   date={2014},
   number={3},
   pages={955--980},
   %issn={0010-3616},
   %review={\MR{3262618}},
   %doi={10.1007/s00220-014-2060-7},
}

\bib{Voi2}{article}{
   author={Voiculescu, Dan}, 
   title={Free probability for pairs of faces II: 2-variables bi-free partial R-transform and systems with rank $\le1$ commutation}, 
   status={preprint},
   eprint={arXiv: 1308.2035}
}

\end{biblist}
\end{bibdiv}

\end{document}